\newcommand\version{December 6, 2018}
\newtheorem{theorem}{Theorem}[section]
\newtheorem{proposition}[theorem]{Proposition}
\newtheorem{lemma}[theorem]{Lemma}
\newtheorem{corollary}[theorem]{Corollary}
\theoremstyle{definition}
\newtheorem{definition}[theorem]{Definition}
\theoremstyle{remark}
\newtheorem{remark}[theorem]{Remark}
\numberwithin{equation}{section}
\renewcommand{\epsilon}{\varepsilon}
\newcommand{\N}{\mathbb{N}}
\newcommand{\R}{\mathbb{R}}
\DeclareMathOperator{\tr}{Tr}
\begin{document}

\title[Hanner complement --- \version]{Inequalities for $L^p$-norms that 
sharpen the triangle inequality and complement Hanner's Inequality}

\author{Eric A. Carlen}
\address[Eric A. Carlen]{Department of Mathematics, Hill Center,
Rutgers University, 110 Frelinghuysen Road, Piscataway NJ 08854-8019, USA}
\email{carlen@math.rutgers.edu}

\author{Rupert L. Frank}
\address[Rupert L. Frank] {Mathematisches Institut, Ludwig-Maximilans 
Universit\"at M\"unchen, Theresienstr. 39, 80333 M\"unchen, Germany, and 
Mathematics 253-37, Caltech, Pasa\-de\-na, CA 91125, USA}
\email{rlfrank@caltech.edu}

\author{Paata Ivanisvili}
\address[Paata Ivanisvili] {Department of Mathematics, 
University of California, Irvine, CA 92617, USA}
\email{pivanisv@uci.edu}

\author{Elliott H. Lieb}
\address[Elliott H. Lieb]{Departments of Mathematics and Physics,
Princeton University, Princeton, NJ 08544,
USA}
\email{lieb@princeton.edu}

\thanks{\copyright\, 2018 by the authors. This paper may be reproduced, in
its entirety, for non-commercial purposes.\\
Work partially supported by NSF grants DMS--1501007 (E.A.C.), DMS--1363432 (R.L.F.), PHY--1265118 (E.H.L.)}

\begin{abstract}
In 2006 Carbery raised a question about an improvement on the na\"ive norm inequality
$\|f+g\|_p^p \leq 2^{p-1}(\|f\|_p^p + \|g\|_p^p)$
for
two functions in $L^p$ of any measure space. When $f=g$ this is an 
equality, but when the supports of  $f$ and $g$ are disjoint the factor 
$2^{p-1}$ is not needed. Carbery's question concerns a proposed interpolation between the two 
situations for $p>2$. The interpolation parameter measuring the overlap 
is $\|fg\|_{p/2}$. We prove an inequality of this type that is
stronger than the one Carbery proposed. Moreover, our stronger 
inequalities are valid for {\it all } $p$.

\end{abstract}

\maketitle
\centerline{\version}

\section{Introduction and main theorem}

Since $|z|^p$ is a convex function of $z$ for $p\geq1$, for any measure space, the $L^p$   unit ball, 
$
B_p := \{ f \ :\ \int |f|^p \leq 1 \}
$,
is convex. 
One way to express this is with Minkowski's triangle inequality $\|f+g\|_p 
\leq \|f\|_p + \|g\|_p$. Another is the inequality
\begin{equation}\label{naive}
\|f+g\|_p^p \leq 2^{p-1} \left(\|f\|_p^p + \|g\|_p^p\right),
\end{equation}
valid for any functions $f$ and $g$ on any measure space. There is equality if and only if
$f=g$ and, in Theorem \ref{main},  we improve \eqref{naive} substantially   when $f$ and $g$ are far from equal.

In 2006 Carbery proposed \cite{C} several plausible refinements of \eqref{naive} for 
$p\geq2$, of which the strongest was
\begin{equation} \label{carb}
\int \left|f+g \right|^p \leq \left(1+ \frac{\Vert fg \Vert_{p/2}} {\Vert 
f\Vert_p \Vert g\Vert_p} \right)^{p-1}\int\left(|f|^p +|g|^p\right).
\end{equation}

There is equality in \eqref{carb} both when 
$f=g$ and when $fg=0$.  Thus, \eqref{carb}, if true, can be viewed as a refinement of \eqref{naive} in which there is equality not only when $f=g$ but also when $fg=0$.

The ratio ${\displaystyle \Gamma = \frac{\Vert fg \Vert_{p/2}} {\Vert 
f\Vert_p \Vert g\Vert_p} }$ varies between 0 and 1 and, therefore, the factor of
$(1+\Gamma)^{p-1} $ varies between 1 and $2^{p-1} $, interpolating between the two cases of equality in \eqref{carb}.

We  propose and prove a  strengthening of \eqref{carb} in which 
$\Gamma $ is replaced by the quantity 
\begin{equation} \label{quant}
\widetilde\Gamma:=
\Vert fg \Vert_{p/2} 
\left(\frac
{\Vert f\Vert_p^p + \Vert g\Vert_p^p}{2}\right)^{-2/p}  \  ,
\end{equation}
which is smaller by virtue of the arithmetic-geometric mean inequality.

Our improved inequalities are not restricted to $p>2$, but are valid for 
all $p \in \R$, as stated in Theorem \ref{main}. There we write 
$$\|f\|_p := \left(\int |f|^p \right)^{1/p} \quad \text{for {\bf all}} \ p\neq 0.$$
We note that inequality \eqref{carb} involves
three kinds of 
quantities on the right side ($\Vert fg \Vert_{p/2},\ \Vert f\Vert_p^p + 
\Vert g\Vert_p^p $ and $\Vert 
f\Vert_p \Vert g\Vert_p$), while our inequality involves only two ($\Vert 
fg \Vert_{p/2}$ and $\Vert f\Vert_p^p + 
\Vert g\Vert_p^p $), a simplification that is essential for our proof.

\begin{theorem}[Main Theorem] \label{main}
 For all $ p \in
  (0, 1] \cup [2, \infty )$ and functions $f$ and $g$ on any measure 
space, 
 \begin{equation} \label{maineq}\boxed{
\int \left|f+g\right|^p \leq \left(1+ \frac{2^{2/p}\Vert 
fg \Vert_{p/2}} 
{\left(\Vert f\Vert_p^p + \Vert g\Vert_p^p\right)^{2/p} }
\right)^{p-1} 
\int\left(\, |f|^p +|g|^p\, \right).}
\end{equation}
The inequality reverses if  $ p\in(-\infty,0)\cup(1,2)$, where, for $p\in(1,2)$,  it is assumed that $f$ and $g$ are positive almost everywhere.    \\
For $p>2$,   \ (resp. for $p\in (0,1)$ )
the inequality is false if \, $\widetilde\Gamma $
is raised to any power $q>1$, (resp. $q<1$).\\
For $p<0$,   \ (resp. for $p\in [1,2]$ )
the reversed inequality is false if \, 
$\widetilde\Gamma$
is raised to any power $q>1$, (resp. for $q<1$). 

For $p>0$, $p\neq 1,2$, $\|f\|_p,\|g\|_p < \infty$, there is equality in \eqref{maineq} if and only if $f$ and $g$ have disjoint supports, up to a null set, or 
are equal almost everywhere.
For $p<0$, $\|f\|_p,\|g\|_p < \infty$, there is equality in \eqref{maineq} if and only if $f$ and $g$  are equal almost everywhere.

\end{theorem}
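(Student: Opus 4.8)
The plan is to recast inequality~\eqref{maineq} as a statement of the form $\int F\,d\mu\le\Phi\!\left(\int G_1\,d\mu,\int G_2\,d\mu\right)$, where, pointwise, $F=|f+g|^p$, $G_1=|f|^p+|g|^p$, $G_2=|fg|^{p/2}$, and $\Phi$ is the positively $1$-homogeneous function on the cone $\mathcal K=\{(x,y):x>0,\ 0\le y\le x/2\}$ obtained as the perspective of a one-variable function,
\[
\Phi(x,y)=x\,\phi(y/x),\qquad \phi(t)=\bigl(1+(2t)^{2/p}\bigr)^{p-1}.
\]
This is exactly the simplification stressed in the Introduction: the right side of~\eqref{maineq} involves only the two integrals $\int G_1$, $\int G_2$, and the pointwise arithmetic--geometric mean bound $|fg|^{p/2}\le\tfrac12(|f|^p+|g|^p)$, valid for every real $p$, puts $\bigl(\int G_1,\int G_2\bigr)$ in $\mathcal K$. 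First I would reduce to nonnegative $f,g$ and to simple functions: for $p>0$, replacing $f,g$ by $|f|,|g|$ only increases $\int|f+g|^p$ and leaves $G_1,G_2$ fixed; for $p\in(1,2)$ the positivity hypothesis gives $|f+g|=f+g$; for $p<0$ one uses $|f+g|^p\ge(|f|+|g|)^p$; and the passage from simple functions to general $f,g$ with finite norms is handled by the usual monotone-convergence approximation (with an extra truncation step when $p<0$).

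The proof then rests on two facts. The first, which I expect to be the technical heart and the main obstacle, is the \emph{one-atom (one-variable) inequality}: for all $u,v\ge0$,
\[
(u+v)^p\ \le\ \Phi\bigl(u^p+v^p,\,(uv)^{p/2}\bigr)=(u^p+v^p)\left(1+\Bigl(\frac{2(uv)^{p/2}}{u^p+v^p}\Bigr)^{2/p}\right)^{p-1},
\]
for $p\in(0,1]\cup[2,\infty)$, with the reverse for $p\in(-\infty,0)\cup(1,2)$. By homogeneity this is genuinely one-dimensional --- normalizing $u^p+v^p=2$ it reads $(u+v)^p\le2(1+uv)^{p-1}$ --- and it is an equality exactly when $u/v\in\{0,1,\infty\}$. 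Since the two sides agree at both endpoints \emph{and} at an interior point, verifying it requires a careful study of the sign of the derivative of their difference; for instance at $p=4$ it collapses (after clearing one square root) to $(1-(uv)^2)^2\ge0$, but general real $p$ needs genuine work. The second fact is the \emph{concavity or convexity of $\phi$ on $[0,1/2]$}: a direct differentiation gives
\[
\phi''(t)=\frac{2(p-1)(p-2)}{p^2}\,2^{2/p}\,t^{2/p-2}\bigl(1+(2t)^{2/p}\bigr)^{p-3}\bigl((2t)^{2/p}-1\bigr),
\]
and since $(2t)^{2/p}-1$ has sign $-\sgn p$ on $(0,\tfrac12)$, one gets $\sgn\phi''(t)=-\sgn\!\bigl(p(p-1)(p-2)\bigr)$; thus $\phi$ --- and hence $\Phi$ on $\mathcal K$ --- is concave for $p\in(0,1)\cup(2,\infty)$ and convex for $p\in(-\infty,0)\cup(1,2)$, so $\Phi$ is superadditive (resp.\ subadditive). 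Combining: for a simple pair $(f,g)$ with values $(u_k,v_k)$ on sets of measure $\mu_k$, put $\xi_k=\bigl(u_k^p+v_k^p,\,(u_kv_k)^{p/2}\bigr)\in\mathcal K$; in the concave case
\[
\int|f+g|^p\,d\mu=\sum_k\mu_k(u_k+v_k)^p\le\sum_k\mu_k\Phi(\xi_k)=\sum_k\Phi(\mu_k\xi_k)\le\Phi\!\Bigl(\sum_k\mu_k\xi_k\Bigr)=\Phi\!\Bigl(\int G_1,\int G_2\Bigr),
\]
which is~\eqref{maineq}; the convex case reverses every inequality. The resulting restrictions on $p$ agree with those in the statement.

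Lastly, the equality cases and sharpness. Suppose $\|f\|_p,\|g\|_p<\infty$ and $p\ne1,2$; then equality in~\eqref{maineq} (or in its reverse) forces equality in both steps above. Equality in the one-atom inequality forces, at a.e.\ $x$, $f(x)=0$ or $g(x)=0$ or $f(x)=g(x)$; and because $\phi$ is \emph{strictly} concave (resp.\ convex) on the open interval $(0,\tfrac12)$, so $\Phi$ is strictly concave (resp.\ convex) in directions transverse to the rays of $\mathcal K$, equality in the (super/sub)additivity step forces the vectors $\xi(x)=\bigl(|f(x)|^p+|g(x)|^p,\,|f(x)g(x)|^{p/2}\bigr)$ to be a.e.\ parallel. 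For $p>0$: where $f,g$ are not both zero, $\xi(x)$ points along $(1,0)$ if $fg=0$ there and along $(2,1)$ if $f=g$ there, so a.e.\ parallelism forces one alternative to hold everywhere --- either $\supp f$ and $\supp g$ are disjoint up to a null set, or $f=g$ a.e. For $p<0$: finiteness of $\|f\|_p,\|g\|_p$ forbids $f$ or $g$ from vanishing on a set of positive measure, so the pointwise condition already yields $f=g$ a.e. For optimality of the exponent $1$ on $\widetilde\Gamma$, I would test~\eqref{maineq}, with $\widetilde\Gamma$ replaced by $\widetilde\Gamma^{\,q}$, on a one-atom measure space near the equality configuration $u=v$: with $u=1+\epsilon$ and $v$ determined by $u^p+v^p=2$, a second-order Taylor expansion shows that the right side minus the left side of~\eqref{maineq} equals $2^{p-1}p(p-1)(1-q)\epsilon^2+O(\epsilon^3)$, which has a fixed nonzero sign for small $\epsilon$ whenever $q\ne1$; the sign of $p(p-1)$ then pins down, in each range of $p$, the one-sided direction in which $q$ cannot be moved, yielding the stated sharpness.
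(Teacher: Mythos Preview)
Your reduction is correct and is genuinely different from the paper's, though both arrive at the same one-variable core inequality. The paper first passes to $\alpha=f/(f+g)$ on a probability space and then applies Jensen to the function $b\mapsto H(b)$, where $b=\alpha^p+(1-\alpha)^p$ and $H(b)=(\alpha(1-\alpha))^{p/2}$; establishing the convexity/concavity of $H$ requires the reparametrization $e^{2x}=\alpha/(1-\alpha)$ and a nontrivial sign analysis (Lemma~\ref{con}). Your route---apply the one-atom inequality pointwise and then use super/subadditivity of the $1$-homogeneous perspective $\Phi(x,y)=x\,\phi(y/x)$---replaces that lemma by the elementary computation of $\phi''$, whose sign you read off directly. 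This is a cleaner reduction and buys you a simpler equality analysis as well: strict concavity/convexity of $\phi$ on $(0,\tfrac12)$ forces the vectors $\xi(x)$ to lie on a single ray, which together with the pointwise equality cases gives the dichotomy immediately, whereas the paper routes this through Lemma~\ref{constb}.

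What you have not supplied is exactly what the paper spends its effort on: the one-atom inequality itself, i.e.\ \eqref{constantA} (equivalently your normalized form $(u+v)^p\le 2(1+uv)^{p-1}$ when $u^p+v^p=2$), with the reverse in the complementary ranges. You correctly flag this as ``the technical heart and the main obstacle,'' and indeed the paper's entire Section~4 is devoted to it via a chain of auxiliary functions $f\to g\to h\to v\to w$ and repeated sign-change counting. Your sharpness argument by Taylor expansion at $u=v$ is in the same spirit as the paper's expansion in the variable $s$ of Remark~\ref{alter}; either works, and the sign of the $\epsilon^2$ coefficient does pin down the one-sided optimality of $q=1$ in each range. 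So: your Parts A/B are a valid (and arguably tidier) alternative to the paper's Parts A/B, but a complete proof still requires carrying out Part~C.
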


We note that in proving the theorem, we may always assume that $f$ and $g$ are non-negative. In fact, the right side of \eqref{maineq} only depends on $|f|$ and $|g|$ and the left side does not decrease for $p\geq 0$ and does not increase for $p<0$ if $f$ and $g$ are replaced by $|f|$ and $|g|$. The latter follows since $|f+g| \leq |f| + |g|$ implies $|f+g|^p \leq (|f|+|g|)^p$ for $p>0$ and  $|f+g|^p \geq (|f|+|g|)^p$ for $p<0$.

Carbery proved that his proposed inequality is valid  when $f$ and $g$ are characteristic 
functions.  Our theorem can also be easily proved in this special case.

Theorem \ref{main} may be viewed as a refinement of Minkowski's inequality.  
Since 
\eqref{naive}, like Minkowski's inequality, is a direct expression of the 
convexity of $B_p$, it is equivalent to Minkowski's  inequality.  
We recall the simple argument: For any unit vectors $u,v\in L^p$, \eqref{naive} says that $\|(u+v)/2\|_p \leq 1$, and then by continuity, 
$\|\lambda u + (1-\lambda) v\|_p \leq 1$ for all $\lambda\in (0,1)$. 
Suppose $0 < \|f\|_p,\|g\|_p < \infty$, and define $\lambda = 
\|f\|_p/(\|f_p\|+ \|g\|_p)$, $u = \|f\|_p^{-1}f$, and $v = \|g\|_p^{-1}g$. 
Then
$$\| f +g\|_p^p = (\|f\|_p + \|g\|_p)^p \|\lambda u + (1-\lambda)  v\|_p^p \leq  (\|f\|_p + \|g\|_p)^p\ , $$
which is Minkowski's inequality.

When $p=1$ and $f,g \geq 0$, \eqref{naive} is an identity; otherwise when $p>1$,
there is equality in \eqref{naive}  if and only if   
$f=g$.   When the supports of  $f$ and $g$ are disjoint, however, 
\eqref{naive} is far from an equality and the factor 
$2^{p-1}$ is not needed.  There is equality in Minkowski's inequality whenever $f$ is a multiple of $g$ or {\em vice-versa}. Hence  although \eqref{naive} is equivalent to Minkowski's inequality, it  becomes an equality in fewer circumstances.

There is another well-known refinement of Minkowski's inequality for $1 < p < \infty$, namely {\em Hanner's inequality}, \cite{H,BCL,LL} which gives the exact modulus of convexity of $B_p$, the unit ball in $L^p$. For $p\geq 2$, and unit vectors $u$ and $v$, Hanner's inequality
says that 
\begin{equation}\label{clark}
\left\Vert \frac{u+v}{2}\right\Vert_p^p +  \left\Vert 
\frac{u-v}{2}\right\Vert_p^p\leq  1,
\end{equation}
which is also a consequence of one of Clarkson's inequalities \cite{Cl}.  When $u$ and $v$ have disjoint supports, $\|u+v\|_p^p = 
\|u-v\|_p^p =2$, and then the left hand side is $2^{2-p}$, so that for unit 
vectors $u$ and $v$, the condition   $uv= 0$, which yields equality in  
the inequality of Theorem~\ref{main}, does not yield equality in Hanner's 
inequality. On the other hand, while one can derive a bound on the modulus 
of convexity in $L^p$ from \eqref{maineq}, one does not obtain the sharp 
exact result provided by Hanner's inequality.
Both inequalities express a quantitative strict convexity property of $B_p$, 
but neither implies the other; they provide complimentary information, with 
the information provided by Theorem~\ref{main} being especially strong when 
$f$ and $g$ have small overlap as measured by $\|fg\|_{p/2}$.

Our proof of Theorem~\ref{main} consists of three parts:

\medskip

\quad Part A:  We show how to reduce the inequality to a simpler one 
involving only one function, namely $\alpha := f/(f+g)$ for $f,g>0$, which takes values in $[0,1]$, and a reference measure that is a probability measure.  This exploits 
the fact that the only important quantity is the ratio of $f$ to 
$g$. This part is very easy.

\medskip

\quad Part B: In the second part, which  is more difficult than Part A,  we 
show that Theorem~\ref{main} is true if it
is true when the  function $\alpha$ is constant. (This is the same as 
saying $f$ 
and $g$ are proportional to each other.) When $\alpha := f/(f+g)$ is constant 
and the reference measure is a probability measure, \eqref{maineq} yields 
the inequality for \emph{numbers} $\alpha\in [0,1]$ and $p\in [0,1] \cup [2,\infty]$,
\begin{equation}\label{constantA}
  1 \leq \left( 1+ \left (\frac{2 \alpha^{p/2}(1-\alpha)^{p/2}}{\alpha^p +(1-\alpha)^p} 
\right)^{2/p} \right)^{p-1} \left( \alpha^p +(1-\alpha)^p \right) \ .
\end{equation}
with the reverse inequality for $p\notin [0,1] \cup [2,\infty]$. 

\begin{remark} The quantity ${\displaystyle R := \frac{2 \alpha^{p/2}(1-\alpha)^{p/2}}{\alpha^p +(1-\alpha)^p}}$ lies in $[0,1]$ for all $\alpha$ and $p$.   Therefore, $R^q$ decreases as $q$ increases.  Thus for $p\geq 2$, the inequality 
\begin{equation}\label{constantB}
  1 \leq \left( 1+ \left (\frac{2 \alpha^{p/2}(1-\alpha)^{p/2}}{\alpha^p 
+(1-\alpha)^p} 
\right)^{q} \right)^{p-1} \left( \alpha^p +(1-\alpha)^p \right) \ ,
\end{equation}  strengthens as  $q$ increases,  and for 
$p\in [0,1]$, it strengthens  as  $q$ decreases. Likewise, for $p\in [1,2]$ the reverse of \eqref{constantB}  is stronger for smaller $q$, and for $p < 0$, it is stronger for larger $q$. 
\end{remark}

\quad Part C: 
With Parts A and B complete,  the proof reduces to a 
seemingly elementary  inequality, parametrized by $p$, for a 
number $\alpha\in [0,1]$.  The proof of this is Part C. While the validity of \eqref{constantA} appears to be a 
consequence of Theorem \ref{main},  one can also view Theorem \ref{main} as a
consequence of \eqref{constantA}.

\begin{theorem}\label{meansthm}  For all numbers $\alpha\in [0,1]$,
inequality \eqref{constantA} is valid for 
all $p\in (0,1]\cup [2,\infty)$,   and the  reverse 
inequality is valid for all
$p\in(-\infty,0)\cup(1,2)$.

For $p>2$,   \ (resp. for $p\in (0,1)$ )
inequality~\eqref{constantB} is false if $q>2/p$, (resp. for $q<2/p$).

For $p<0$,   \ (resp. for $p\in [1,2]$ )
the reverse inequality is false if 
$q>2/p$, (resp. for $q<2/p$).  

For $p>0$, $p\neq 1,2$, there is equality if and only if $\alpha\in \{0,1/2,1\}$. For $p<0$, there is equality if and only if $\alpha =1/2$. 
\end{theorem}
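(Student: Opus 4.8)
The plan is to reduce Theorem~\ref{meansthm} to a single-variable calculus problem by a convenient substitution, and then to verify the resulting inequality by studying the sign of an explicit function and its derivatives. First I would exploit the symmetry $\alpha\leftrightarrow 1-\alpha$ of \eqref{constantA} to restrict attention to $\alpha\in[0,1/2]$, and observe that the endpoints $\alpha=0$ and $\alpha=1/2$ give equality (at $\alpha=0$ the bracket $R$ vanishes and the claim is the triviality $1\le 1$; at $\alpha=1/2$ one has $R=1$ and both sides equal $2^{p-1}\cdot 2^{1-p}=1$). So the content is the strict inequality on the open interval $(0,1/2)$, together with the boundary-behavior claims about which powers $q$ are admissible.

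The key change of variables is to set $t := \alpha^p+(1-\alpha)^p$ and to note that $\alpha^{p/2}(1-\alpha)^{p/2} = (\alpha(1-\alpha))^{p/2}$, so that $R = 2\big(\alpha(1-\alpha)\big)^{p/2}/t$. Even better, write $s := \big(2\alpha^{p/2}(1-\alpha)^{p/2}\big)^{2/p}\big/\big((\alpha^p+(1-\alpha)^p)/2\big)^{2/p}\cdot$, i.e.\ let $R^{2/p} =: u$, so that \eqref{constantA} becomes $1 \le (1+u)^{p-1} t$ with $u^{p/2} = 2\alpha^{p/2}(1-\alpha)^{p/2}/t$ and $t = \alpha^p + (1-\alpha)^p$. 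Introducing the single parameter $\beta := (1-\alpha)/\alpha \ge 1$, one has $t = \alpha^p(1+\beta^p)$ and the whole inequality becomes homogeneous of degree zero in $\alpha$, hence a function of $\beta$ alone (after the dust settles, one finds $(1+u)^{p-1}t \ge 1$ is equivalent to something like $\big(1 + 2^{2/p}\beta\,(1+\beta^p)^{-2/p}\big)^{p-1}(1+\beta^p) \ge (1+\beta)^p\cdot$, using $1 = ((1+\beta)/(1+\beta))^p$ appropriately normalized). I would then take logarithms and define $\Phi(\beta)$ to be the difference of the two sides' logarithms, check $\Phi(1)=0$, examine $\Phi'(\beta)$, and show $\Phi$ has a definite sign by a convexity or monotonicity argument in $\beta$ (or, if needed, by a further substitution $\beta = e^x$ to symmetrize). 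The sharpness statements for the exponent $q$ follow by expanding $\Phi_q(\beta)$ near $\beta=1$: since $R=1$ at $\alpha=1/2$ and $R<1$ nearby, replacing the exponent $2/p$ by $q$ changes the inequality only through the second-order Taylor coefficient at $\beta=1$, and matching that coefficient's sign to the known sign of $\Phi$ pins down the threshold $q=2/p$.

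The main obstacle I anticipate is the verification that $\Phi$ (or its one-variable reduction) does not change sign on the whole range $\beta\in[1,\infty)$ for every $p$ in the relevant set $(0,1]\cup[2,\infty)$ — the function is transcendental in $p$ as well as in $\beta$, so a brute-force derivative computation is messy and the sign of $\Phi''$ is not obviously constant. I would try to tame this either by a clever factoring that isolates a manifestly signed factor (exploiting that $p-1$ and $2-p$ and $1/p$ all enter with predictable signs on each subinterval), or by reducing to the behavior near the two equality points plus a coarse monotonicity estimate away from them. A secondary technical point is handling the degenerate endpoint $\alpha=0$ (where $R=0$ but the exponent $2/p$ could be large or $p-1$ negative) and the case $p\in(0,1)$ separately from $p\ge 2$, since the direction of several elementary inequalities (convexity of $x\mapsto x^p$, sign of $p-1$) flips; I expect the $p\in(0,1)$ case to mirror the $p\ge2$ case under the formal substitution but to require care because $\alpha^p+(1-\alpha)^p \ge 1$ reverses. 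Finally, the claimed equality cases ($\alpha\in\{0,1/2,1\}$ for $p>0$, $\alpha=1/2$ for $p<0$) drop out of the strictness of $\Phi$ on the open interval once the sign is established, together with the observation that for $p<0$ the value $\alpha=0$ is excluded by finiteness of $\|f\|_p$.
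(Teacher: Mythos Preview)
Your reduction is essentially the paper's: setting $\beta=(1-\alpha)/\alpha$ (the paper uses $t=\beta^{\,p}$ and $c=1/p$) and taking logarithms to define an auxiliary function that must have constant sign on the open interval, vanishing at both endpoints for $p>0$. Your plan for sharpness via Taylor expansion at $\alpha=1/2$ is also what the paper does (with the substitution $\alpha=(1+\sqrt s)/2$, chosen so that the expansion becomes first order in $s$).

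The genuine gap is exactly the step you flag as the ``main obstacle'', and the methods you propose for it do not work. A convexity or monotonicity argument for $\Phi$ fails: the paper shows that $\Phi'$ (their $f'$) has \emph{exactly one} sign change on the open interval, so $\Phi$ is neither monotone nor convex---it rises and then falls (or the reverse), and only the combination of this with $\Phi=0$ at both endpoints yields the sign. The paper's argument is a cascade of such unique-sign-change reductions: $f'$ is written as a positive factor times a function $g$; a logarithm of the relevant piece gives $h$; one shows $h'$ has a unique sign change by writing $h'=v/(\text{positive})$ and analyzing $v$; and $v$ is handled by computing $v(1)=v'(1)=v''(1)=0$, $v'''(1)\neq0$, and proving $v''$ itself has a unique sign change via $v'''$ and a further factor $w$. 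Each layer requires separate case analysis in $c=1/p$ (the ranges $c<0$, $c\in(0,1)$, $c\in(1,2)$, $c=2$, $c>2$ are treated individually, the last three using yet another auxiliary function). There is no ``clever factoring that isolates a manifestly signed factor'', and a ``coarse monotonicity estimate away from the endpoints'' would have to be quantitative in $p$ in a way your proposal does not make precise---indeed, the contour plots in the paper show $\Phi$ stays within a few percent of zero, so coarse estimates are unlikely to suffice.

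In short, your setup and your treatment of the sharpness and equality cases match the paper, but the heart of the proof---the sign of $\Phi$ uniformly in $\beta$ and $p$---is not supplied by your proposal, and the mechanisms you suggest are not the ones that actually carry the argument.
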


\subsection{Restatement of Theorem \ref{meansthm} in terms of means}

Inequality \eqref{constantA} can be restated in terms of $q$th power 
means \cite{HLP}:  For $x,y>0$, define
$$
M_q(x.y) = ((x^q + y^q)/2)^{1/q}
\quad\text{if}\ q\in\R\setminus\{0\}
\qquad\text{and}\qquad
M_0(x,y)=\sqrt{xy} \,.
$$
Note that $M_0(x,y)$ is the \emph{geometric mean} of $x$ and $y$ and $M_{-1}(x,y)$ is their \emph{harmonic mean}.

\begin{corollary}  For all $x,y> 0$, and all  $p\in [0,1] \cup [2,\infty]$
\begin{equation}\label{qmeans}
M_1^p(x,y)  \leq \left(\frac{ M_p(x,y) + 
M_{-p}(x,y)}{2}\right)^{p-1}M_p(x,y)\ ,
\end{equation}
while the reverse inequality is valid for all $p\in(-\infty,0)\cup(1,2)$.
\end{corollary}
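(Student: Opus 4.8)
The plan is to show that \eqref{qmeans} is simply a homogeneous restatement of inequality \eqref{constantA} of Theorem~\ref{meansthm}, so that the corollary requires no work beyond bookkeeping. First I would record that \eqref{constantA} is equivalent to the assertion
\[
(a+b)^p \;\leq\; \left(1+\left(\frac{2(ab)^{p/2}}{a^p+b^p}\right)^{2/p}\right)^{p-1}\,(a^p+b^p)
\qquad\text{for all } a,b>0 .
\]
Indeed, putting $a=\alpha$, $b=1-\alpha$ turns the left side into $1$ and recovers \eqref{constantA}; conversely both sides of the display are positively homogeneous of degree $p$ in $(a,b)$ --- the inner ratio being homogeneous of degree $0$ --- so the special case $a+b=1$ already yields the general case. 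The same scaling argument carries the reverse of \eqref{constantA} to the reverse of the display.

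Next I would translate each ingredient of the display into a power mean, writing $x=a$, $y=b$. From the definitions, $a+b=2M_1(x,y)$ and $a^p+b^p=2M_p(x,y)^p$. The one identity doing the real work is $M_p(x,y)\,M_{-p}(x,y)=xy$, immediate from $M_p(x,y)^p=(x^p+y^p)/2$ and $M_{-p}(x,y)^p=2x^py^p/(x^p+y^p)$. It gives $(ab)^{p/2}=(M_pM_{-p})^{p/2}$, hence
\[
\frac{2(ab)^{p/2}}{a^p+b^p}=\frac{(M_pM_{-p})^{p/2}}{M_p^p}=\left(\frac{M_{-p}}{M_p}\right)^{p/2},
\qquad
\left(\frac{2(ab)^{p/2}}{a^p+b^p}\right)^{2/p}=\frac{M_{-p}}{M_p},
\]
so the bracket equals $1+M_{-p}/M_p=(M_p+M_{-p})/M_p$. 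The display now reads $(2M_1)^p\leq \bigl((M_p+M_{-p})/M_p\bigr)^{p-1}\,2M_p^p$, and dividing by $2^p$ produces exactly \eqref{qmeans}. Theorem~\ref{meansthm} supplies \eqref{constantA} for $p\in(0,1]\cup[2,\infty)$ and its reverse for $p\in(-\infty,0)\cup(1,2)$, giving the two directions of \eqref{qmeans} on those ranges. The endpoint $p=0$ is the trivial identity $1=1$ (both sides of \eqref{qmeans} equal $1$ there), and $p=\infty$ is the limiting case in which $M_\infty=\max$, $M_{-\infty}=\min$, so that $(M_\infty+M_{-\infty})/2=M_1$ and \eqref{qmeans} degenerates to $M_1\leq\max(x,y)$; as a sanity check, at $p=2$ one has $M_1^2=\tfrac12(M_2+M_{-2})M_2$, an equality, consistent with \eqref{constantA} being an identity there.

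There is no real obstacle here: the entire analytic content lies in Theorem~\ref{meansthm}, and this corollary is a cosmetic reformulation in the language of means. The only steps needing a moment's attention are the homogeneity argument that removes the normalization $a+b=1$, and the identity $M_pM_{-p}=xy$ that makes the overlap term collapse to the clean ratio $M_{-p}/M_p$; the boundary values $p\in\{0,\infty\}$ are disposed of separately as indicated above.
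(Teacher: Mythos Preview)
Your proposal is correct and follows essentially the same route as the paper: both arguments identify the bracketed ratio in \eqref{constantA} with $M_{-p}/M_p$ (you via the identity $M_pM_{-p}=xy$, the paper via the equivalent formula $M_{-p}/M_p = 2^{2/p}xy/(x^p+y^p)^{2/p}$), then invoke homogeneity to pass from $\alpha,1-\alpha$ to arbitrary $x,y>0$. Your treatment is slightly more thorough in that you explicitly dispose of the boundary values $p=0$ and $p=\infty$, which the paper's proof leaves implicit.
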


\begin{proof} A simple calculation shows that for all $p > 0$, 
${\displaystyle\frac{M_{-p}(x,y)}{M_p(x,y)} = \frac{2^{2/p}xy}{(x^p + y^p)^{2/p}}}$.
Thus, taking $x=\alpha$ and $y = 1-\alpha$, the inequality \eqref{constantA} can be written as
$$
\frac12 \leq  \left( 1 +  
\frac{M_{-p}(\alpha,1-\alpha)}{M_p(\alpha,1-\alpha)} \right)^{p-1} 
M_p^p(\alpha,1-\alpha)\ ,$$
Then by homogeneity and the fact that $M_1(\alpha,1-\alpha) = 1/2$, \eqref{constantA} is equivalent to \eqref{qmeans}
\end{proof}

The following way to write our inequality sharpens and 
complements  the arithmetic-geometric mean inequality  for any two numbers 
$x,\,  y >0$, provided one has information on $M_p(x,y)$. 

\begin{corollary}[Improved and complemented AGM inequality]  For all $x,y > 
0$, and all $p > 2$, 
\begin{equation} 
 1 - \left(\frac{A}{M_p} \right)^{p'}  \, \geq  \,  
\frac12\left( 1 -\left(\frac{G}{M_p}\right)^2 \right)    \geq
\frac12\left(1- \left(\frac{G}{M_{p'}}\right)^2 \right)  \geq  1-  
\left(\frac{A}{M_{p'}} \right)^{p} 
 \end{equation} 
where $p' = p/(p-1)$, $A=(x+y)/2$ and $G = \sqrt{xy}$.
\end{corollary}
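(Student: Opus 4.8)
The plan is to obtain the four-term chain by applying the power-mean inequality \eqref{qmeans} and its reversal twice --- once at the exponent $p$ and once at its conjugate $p' = p/(p-1)$ --- together with the single elementary identity $M_q(x,y)\,M_{-q}(x,y) = xy = G^2$, valid for every $q > 0$, which we rewrite as $(G/M_q)^2 = M_{-q}(x,y)/M_q(x,y)$. Throughout abbreviate $M_q := M_q(x,y)$. Since $p > 2$ we have $1 < p' < 2 < p$, the numbers $p-1$ and $p' - 1 = 1/(p-1)$ are positive, and $p'(p-1) = p$; these are the only arithmetic facts used.

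First I would handle the leftmost inequality. Using $(G/M_p)^2 = M_{-p}/M_p$ it is equivalent to $(A/M_p)^{p'} \le \tfrac12\bigl(1 + M_{-p}/M_p\bigr) = (M_p + M_{-p})/(2M_p)$. Now take \eqref{qmeans} at the exponent $p \ge 2$, i.e.\ $A^p \le \bigl((M_p + M_{-p})/2\bigr)^{p-1} M_p$, divide through by $M_p^p$ to get $(A/M_p)^p \le \bigl((M_p + M_{-p})/(2M_p)\bigr)^{p-1}$, and raise both sides to the positive power $1/(p-1)$; since $p/(p-1) = p'$, this is exactly the desired bound. The middle inequality is pure monotonicity of power means: after cancelling the common factors it reads $M_{p'} \le M_p$, which holds because $q \mapsto M_q$ is nondecreasing and $p' < p$. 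The rightmost inequality is the mirror image of the first: using $(G/M_{p'})^2 = M_{-p'}/M_{p'}$ it becomes $(A/M_{p'})^p \ge (M_{p'} + M_{-p'})/(2M_{p'})$, and one gets it from the \emph{reversed} form of \eqref{qmeans} at the exponent $p' \in (1,2)$, namely $A^{p'} \ge \bigl((M_{p'} + M_{-p'})/2\bigr)^{p'-1} M_{p'}$, by dividing by $M_{p'}^{p'}$ and raising to the positive power $1/(p'-1) = p-1$, using $p'(p-1) = p$.

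I do not expect a genuine obstacle: the whole argument is a chain of equivalences once one notices that the substitution $(G/M_q)^2 = M_{-q}/M_q$ turns the geometric-mean terms into precisely the ratios occurring in \eqref{qmeans}, and that the ``outer'' exponents $p'$ and $p$ attached to $A/M_p$ and $A/M_{p'}$ are exactly what results from raising the two mean inequalities (at $p$ and at $p'$) to the reciprocal of $p-1$, respectively $p'-1$. The one point to keep straight is the conjugate-exponent bookkeeping, together with the fact that \eqref{qmeans} points in opposite directions at $p$ and at $p'$ --- which is what makes the chain close up at both ends.
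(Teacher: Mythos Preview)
Your argument is correct. The paper states this corollary without proof, presenting it as an immediate consequence of the power-mean inequality \eqref{qmeans} and its reversal; your derivation is precisely the intended one---applying \eqref{qmeans} at $p\ge 2$ for the left inequality, the reversed \eqref{qmeans} at $p'\in(1,2)$ for the right inequality, and the monotonicity $M_{p'}\le M_p$ for the middle, all via the identity $M_q M_{-q}=G^2$ (which the paper records in the proof of the preceding corollary as $M_{-p}/M_p = 2^{2/p}xy/(x^p+y^p)^{2/p}$).
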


\begin{remark} Since $p,p' \geq 1$, all of the quantities being compared in these inequalities are non-negative. 
\end{remark}

Despite the classical appearance of \eqref{qmeans}, we have not been able to 
find it in the literature, most of which concerns inequalities for means 
$M_q(x_1,\dots, x_n) = (\frac1n\sum_{j=1}^n x_j^p)^{1/p}$ of an $n$-tuple of 
non-negative numbers, often with more general weights.  The obvious 
generalization of \eqref{qmeans} from two
to three non-negative numbers $x$, $y$, and $z$ is false as one sees by 
taking $z=0$: Then there is no help from $M_{-p}(x,y,z)$ on the right. A 
valid generalization to more variables probably involves means over 
$M_{-p}(x_j,x_k)$ for the various pairs. In any case, as far as we know, 
\eqref{qmeans} is new.

A truly remarkable feature of the inequality \eqref{qmeans} 
is that it is {\em surprisingly close to equality uniformly in the arguments}. To see 
this, let $f(\alpha,p)$ denote the right hand side of \eqref{constantA}. Contour plots of this function for various ranges of $p$ are shown in Figs. 1, 2 and 3 below.


\medskip

\centerline{ 
\includegraphics[width=1.8 in]{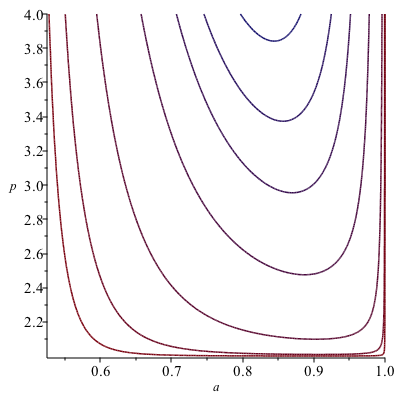} \quad
\includegraphics[width=1.8 in]{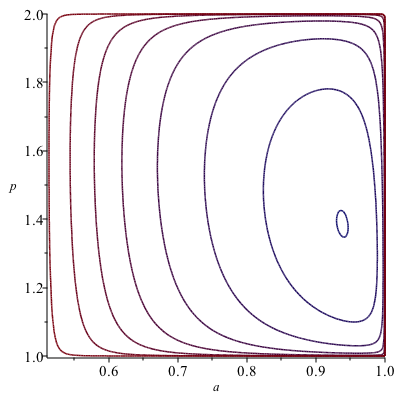}\quad
\includegraphics[width=1.8 in]{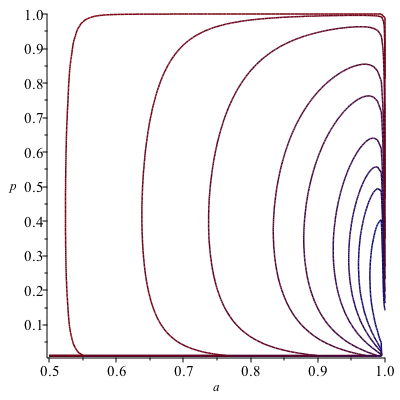} }
\centerline{\footnotesize Fig.~1\hskip 1.6 true in  Fig.~2  \hskip 1.6 true in Fig.~3}\nobreak


Fig. 1 is a contour plot of this function in $[1/2,1]\times [2,4]$. 
The contours shown in Fig.~1 range from $1.00001$ to $1.018$. Note that the function $f$ is identically $1$ along three sides of plot: $\alpha =1/2,1$, and $p=2$. The maximum value for $2 \leq p \leq 4$, near $1.018$, occurs towards the middle of the segment at $p=4$.

Fig.~2  is a contour plot of $f$   on  $[1/2,1]\times [1,2]$. The contours range from $0.9961$ (the small closed contour) to $0.99999999$ (close to the boundary). 
\emph{ Amazingly, the function in \eqref{constantA} is quite close  -- within two percent --
to the constant 1 over the range $p\geq 1$ and $\alpha\in [0,1]$.}  Moreover, the ``landscape'' is quite flat: The gradient has a small norm over the whole domain. 


Fig.~3 is a contour plot of $f$ in the domain  
$[0,1/2]\times[0,1]$. The contours in Fig.~3 range from  $1.0000001$ to $1.06$. Higher values are to the right. 
For $p$ in this range, the maximum is not so large -- about $1.06$ -- but the landscape gets very ``steep'' near $\alpha =1$ and $p =0$.   The proof of the inequality is especially delicate in this case.

For $p < 0$, there is equality only at $\alpha =1/2$, and the inequality is not so uniformly close to an identity. The contour plot is less informative, and hence is not recorded here.   This is the case in which the inequality is easiest to prove.

It is possible to give a simple direct proof of the inequality for certain integer values of $p$, as we discuss in Section 5. We also give a simple proof that for $p>2$ and for $p<0$, validity of the inequality at $p$ implies validity of the inequality at $2p$, and we briefly discuss an application of this to the problem in which functions are replaced by operators and  integrals are replaced by traces.

\begin{remark}\label{alter}
We close the introduction by briefly discussing one other way to write the inequality \eqref{constantA}. Introduce a new variable $s\in(0,1)$ through
$$
\alpha = \frac{1+\sqrt{s}}{2}
$$
Rewriting  \eqref{constantA}, and taking   the $\frac{1}{p-1}$ root of both sides, we may rearrange terms to obtain. 
\begin{equation}\label{cons21}
2   \leq \eta^{\frac{1}{p-1}}(s)\left( 1 + \frac{1-s}{\eta^{\frac2p}(s)}\right) = \eta^{\frac{1}{p-1}}(s) + (1-s)  \eta^{\frac{2-p}{p(p-1)}}(s) 
\end{equation}
for $0 \leq s \leq 1$, where
\begin{equation}\label{cons22}
\eta(s) := \frac{(1+\sqrt{s})^p + (1-\sqrt{s})^p}{2}\ .
\end{equation}
Taking the $\frac{1}{p-1}$ eliminate the change of direction in the inequality at $p=1$, and it now take on a non-trivial form at $p=1$:
Define 
\begin{equation}\label{fpdef}
f_p(s) :=  \eta^{\frac{1}{p-1}}(s) + (1-s)  \eta^{\frac{2-p}{p(p-1)}}(s) -2\ ,
\end{equation}
for  $p\neq1$, and one easily computes the  limit at $p=1$:
$$
f_1(s) := (2-s)(1-\sqrt{s})^{\frac{1-\sqrt{s}
}{2}}(1+\sqrt{s})^{\frac{1+\sqrt{s}
}{2}} -2\ .
$$
 Theorem~\ref{meansthm} is equivalent to the assertion that for all $s\in (0,1)$
\begin{equation}\label{toshow}
f_p(s) \geq 0 \ {\rm for} \ p\in (-\infty, 0) \cup (2,\infty)  \quad{\rm and}\quad   f_p(s) \leq  0 \ {\rm for} \ p\in (0,2) \ .
\end{equation}
In this form, the inequality is easy to check for some values of $p$. For example, for $p=-1$, $\eta(s) = \frac{1}{1-s}$ and $f_{-1}(s) = (1-s)^{1/2} + (1-s)^{-1/2} -2$. 
which is clearly positive. One can give simple proofs of \eqref{toshow} for other integer values of $p$, e.g., $p=3$ and $p=4$ along these line, but this change of variables is {\em not} what we use to prove the general inequality. It is, however, convenient for checking optimality of of the power $2/p$ in  \eqref{constantA}.
\end{remark}


\section{Part A. \  Reduction from two functions to one }

While Theorem \ref{main}
involves  two functions $f$ and $g$ one can use the arbitrariness of the 
measure to reduce the question to a \emph{single} function defined on a \emph{probability space} (that is, $\int 1=1$). We have already observed that 
it suffices to prove the inequality in the case where $f$ and $g$ are both 
non-negative. For non-negative functions $f$ and $g$,  
set
$$
\alpha = f/(f+g) \,,
\qquad
1-\alpha = g/(f+g) \,.
$$
Replacing the underlying measure $dx$ by the new measure $(f+g)^p 
\,dx/\|f+g\|_p^p$ we see that it suffices to prove the following inequality 
for $p\in [0,1] \cup [2,\infty]$, and also to prove the reverse inequalities
for $p \notin [0,1] \cup [2,\infty]$\ :
\begin{equation}\label{uno} 
1 \leq \left( 1+ 
\frac{2^{2/p}\, \|\alpha(1-\alpha)\|_{p/2}}
{ \left( \, \| \alpha\|_p^p + 
\|1-\alpha\|_p^p \, \right)^{2/p} } \right)^{p-1} \left(\,  \|\alpha\|_p^p 
+ \|1-\alpha\|_p^p \, \right)
\end{equation}
for a single function $0\leq\alpha\leq 1$ on a \emph{probability space}, i.e., $\int 1 =1$.


\section{Part B.\ Reduction to a constant function}

In this section we prove the following. 

\begin{proposition}\label{conred}
If $p\in[0,1]\cup[2,\infty)$, then inequality \eqref{uno} is true for 
all functions $\alpha$ (which is equivalent to \eqref{carb} for all $f,g$) 
if and only if it is true for all constant functions, that is, for all numbers $\alpha \in [0,1]$,
\begin{equation}\label{constant}
  1 \leq \left( 1+ \left (\frac{2 \alpha^{p/2}(1-\alpha)^{p/2}}{\alpha^p +(1-\alpha)^p} 
\right)^{2/p} \right)^{p-1} \left( \alpha^p +(1-\alpha)^p \right).
\end{equation}
If $p\notin[0,1]\cup[2,\infty)$, then the reverse of inequality \eqref{uno} is true for all functions $\alpha$ (which is equivalent to the reverse of \eqref{carb} for all $f,g$) if and only if it is true for all constant functions, that is, for all numbers $\alpha \in [0,1]$, the reverse of \eqref{constant} holds.

Moreover, for  $p\neq 0, 1,2$, there is equality in \eqref{uno} if and only if $\max\{\alpha(x),1-\alpha(x)\}$ is constant almost everywhere.
\end{proposition}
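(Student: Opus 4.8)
The forward implication (``true for all $\alpha$'' $\Rightarrow$ ``true for constants'') is trivial, since constants are admissible functions on a probability space. The substance is the converse: assuming \eqref{constant} for all numbers $\alpha\in[0,1]$, deduce \eqref{uno} for all measurable $\alpha:\Omega\to[0,1]$ on a probability space $(\Omega,\mu)$. The natural route is to interpret both sides of \eqref{uno} as functions of the two moments $a:=\|\alpha\|_p^p=\int\alpha^p\,d\mu$ and $b:=\|1-\alpha\|_p^p=\int(1-\alpha)^p\,d\mu$ together with the cross term $c:=\|\alpha(1-\alpha)\|_{p/2}=\left(\int\alpha^{p/2}(1-\alpha)^{p/2}\,d\mu\right)^{2/p}$, and to show that the worst case is always a two-point distribution --- indeed a distribution supported on $\{0,1\}$ together with one interior mass, or ultimately a constant. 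Concretely: the RHS of \eqref{uno} has the form
\[
F(a,b,c) = \left(1+\frac{2^{2/p}c}{(a+b)^{2/p}}\right)^{p-1}(a+b),
\]
and one checks it is monotone in $c$ in the direction that makes the inequality hardest (increasing in $c$ for $p\ge 2$, since then $(\,\cdot\,)^{p-1}$ is increasing and $2/p\le 1$ so the exponent on $a+b$ causes no trouble; the reverse monotonicity for the reversed-inequality regimes). So it suffices to show that among all $\alpha$ with prescribed $a+b$, the quantity $c=\|\alpha(1-\alpha)\|_{p/2}$ is maximized (resp.\ minimized) by a configuration on which \eqref{constant} already gives the result.

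\textbf{Key steps.} First I would fix the value $t := \|\alpha(1-\alpha)\|_{p/2}^{p/2} = \int \alpha^{p/2}(1-\alpha)^{p/2}\,d\mu$ and regard $a = \int \alpha^p\,d\mu$, $b=\int(1-\alpha)^p\,d\mu$ as linear functionals of the distribution $\nu$ of $\alpha$ on $[0,1]$. The map $x\mapsto (x^p,(1-x)^p,x^{p/2}(1-x)^{p/2})$ sends $[0,1]$ into $\R^3$; the attainable triples $(a,b,t)$ form the convex hull of this curve. One then wants: for fixed $a+b$, the extreme value of the one remaining degree of freedom (encoded by $t$) over this convex body is attained at a vertex, i.e.\ at a distribution supported on at most two points. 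This is a standard moment/Carathéodory argument. Second, reduce a two-point distribution $\nu = \lambda\delta_{\alpha_1}+(1-\lambda)\delta_{\alpha_2}$ further: I would show that one may take $\{\alpha_1,\alpha_2\}$ of the form $\{\beta, 1-\beta\}$ or push one mass to an endpoint $\{0,1\}$, using the symmetry $\alpha\leftrightarrow 1-\alpha$ (which fixes $a+b$ and $c$) and a perturbation/convexity argument in the two remaining parameters. Third, for the endpoint-and-interior configuration $\nu=\lambda\delta_0 + \mu_0\delta_1 + (1-\lambda-\mu_0)\delta_\beta$, observe that mass at $0$ or $1$ contributes to $a+b$ but not to $c$; splitting off that part and using superadditivity of the RHS functional (which again reduces to the homogeneous one-parameter statement) lets one discard it. What remains is a genuinely constant $\alpha$, where \eqref{constant} applies verbatim. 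Finally, assemble: $F$ at the true $\alpha$ is bounded below (above) by $F$ at the extremal two-point configuration, which in turn is bounded below (above) by $1$ by the constant case plus the monotonicity and superadditivity steps.

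\textbf{Main obstacle.} The delicate point is the reduction from a two-point distribution to a constant: two-point configurations are not automatically worse than constants, and the ``splitting off endpoint mass'' manoeuvre requires a genuine superadditivity (or scaling) property of the functional $(a,b,c)\mapsto\bigl(1+2^{2/p}c/(a+b)^{2/p}\bigr)^{p-1}(a+b)$ under the decomposition $(a,b,c)=(a',b',c')+(a'',b'',0)$. Establishing this cleanly --- and with the correct direction in each of the four sign regimes $p\ge 2$, $p\in[0,1]$, $p\in(1,2)$, $p<0$ --- is where the real work lies; it is presumably why the authors emphasize that their RHS involves only the two quantities $\|fg\|_{p/2}$ and $\|f\|_p^p+\|g\|_p^p$, since homogeneity in $a+b$ is exactly what makes the endpoint-removal step go through. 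For the equality statement: tracing back, equality in \eqref{uno} forces equality in the constant-case inequality \eqref{constant} on the effective interior atom, hence (by the equality analysis in Theorem~\ref{meansthm}) $\beta\in\{0,1/2,1\}$ pointwise, i.e.\ $\max\{\alpha(x),1-\alpha(x)\}$ is constant a.e.\ (equal to $1$ where $\alpha\in\{0,1\}$ and to $1/2$ where $\alpha=1/2$), and one must separately check that no equality is lost in the moment-reduction or endpoint-splitting steps --- these hold with equality only under exactly this structural condition on $\alpha$.
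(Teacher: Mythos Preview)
Your proposal has a genuine gap at exactly the point you identify as the ``main obstacle'': the reduction from a two-point distribution to a constant. You never prove the required superadditivity of $(a+b,c)\mapsto (1+2^{2/p}c/(a+b)^{2/p})^{p-1}(a+b)$ under splitting off endpoint mass, and in fact this route is unnecessarily hard because the Carath\'eodory step lands you on an arbitrary two-point distribution, whereas the extremizer is always of the special form $\alpha(x)\in\{\alpha_0,1-\alpha_0\}$.

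The paper's argument bypasses all of this with a single observation you are missing: the quantity $h(a):=a^{p/2}(1-a)^{p/2}$ is a \emph{function of} $b(a):=a^p+(1-a)^p$, since $b$ determines the unordered pair $\{a,1-a\}$. Call this function $H$, so $h=H(b)$. Then the cross term in \eqref{uno} is $\int H(b(\alpha(x)))\,d\mu$, and the entire right-hand side depends only on $B:=\int b(\alpha(x))\,d\mu$ and on $\int H(b(\alpha(x)))\,d\mu$. One computes (Lemma~\ref{con}) that $H$ is strictly convex for $p>2$ and strictly concave for $p<2$, $p\neq 0,1$; a single application of Jensen's inequality on the probability space then gives $\int H(b(\alpha))\ge H(B)$ (resp.\ $\le$), and after tracking the signs of the outer exponents $2/p$ and $p-1$ in each regime, \eqref{uno} is bounded by the same expression with $B$ in place of $\int b$ and $H(B)$ in place of $\int H(b)$. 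Since $B$ lies in the range of $b$, it equals $\alpha_0^p+(1-\alpha_0)^p$ for some number $\alpha_0$, and this is precisely \eqref{constant}. No moment-space geometry, no two-point reduction, no superadditivity.

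Two further remarks. First, your three-moment setup $(a,b,c)$ is overparametrized: the right-hand side of \eqref{uno} depends only on $a+b$ and $c$, and the paper's reparametrization by $b(\alpha)$ is what collapses the problem to one Jensen step. Second, your equality analysis is off: the Proposition asks when the reduction from \eqref{uno} to \eqref{constant} is sharp, not when \eqref{constant} itself is an equality. In the paper's proof this is immediate from strict convexity/concavity of $H$: equality in Jensen forces $b(\alpha(x))$ constant a.e., which (Lemma~\ref{constb}) is exactly $\max\{\alpha(x),1-\alpha(x)\}$ constant a.e. Invoking Theorem~\ref{meansthm} here conflates two different equality questions.
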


To prove this Proposition we need a definition and a lemma. 

\begin{definition}
Fix $p\in \R$ and for $0\leq a \leq 1$, let $h(a) := a^{p/2}(1-a)^{p/2}$ 
and let $b(a) :=
a^p 
+ (1-a)^p$. Clearly, $b$ determines the unordered pair $a$ and $1-a$ and, 
therefore, $b$ determines $h$.  Thus, we can consider the function $b 
\mapsto H(b) := h( a^{-1}(b) )$ (in which  the dependence on
$p$ is suppressed in the notation). 
\end{definition}

\begin{lemma}[convex/concave $H$]\label{con}
The function $b \mapsto H(b) $ is strictly convex when $p\in (2,\infty)$  and 
strictly concave when $p\in (-\infty, 2)$, $p \neq 0,1$. 
\end{lemma}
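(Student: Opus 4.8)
\textbf{Plan for proving Lemma \ref{con}.}
The plan is to parametrize everything by a single variable and then compute a second derivative, checking its sign. Since $b(a) = a^p + (1-a)^p$ is symmetric under $a \mapsto 1-a$, I may restrict attention to $a \in [1/2, 1]$ (or equivalently $a \in [0,1/2]$), on which $a \mapsto b(a)$ is strictly monotone for $p \neq 0, 1$, so that $a^{-1}(b)$ is well-defined and smooth and the composition $H(b) = h(a^{-1}(b))$ makes sense. Rather than inverting explicitly, I would treat $b$ and $H$ both as functions of $a$ and use the chain rule: with $\dot{} = d/da$, one has $H'(b) = \dot h / \dot b$ and then
\[
H''(b) = \frac{d}{db}\left(\frac{\dot h}{\dot b}\right) = \frac{\ddot h \,\dot b - \dot h\, \ddot b}{\dot b^{\,3}}\,.
\]
So the sign of $H''$ is the sign of $(\ddot h \dot b - \dot h \ddot b)/\dot b^{3}$, and I would need to track the sign of $\dot b^3$ (equivalently of $\dot b$) on the chosen branch as well as the sign of the numerator $W(a) := \ddot h(a)\dot b(a) - \dot h(a)\ddot b(a)$, which is (up to sign) a Wronskian-type expression.

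The key computational step is to evaluate $W(a)$ in closed form. Writing $h(a) = [a(1-a)]^{p/2}$ and $b(a) = a^p + (1-a)^p$, both $\dot h, \ddot h, \dot b, \ddot b$ are explicit; a convenient device is to factor out the natural common powers. For instance $\dot b = p(a^{p-1} - (1-a)^{p-1})$ and $\dot h = \tfrac{p}{2}[a(1-a)]^{p/2 - 1}(1-2a)$. After substituting and simplifying — pulling out $[a(1-a)]^{p/2-2}$ and a power of $p$ — I expect $W(a)$ to reduce to $[a(1-a)]^{p/2-2}$ times a polynomial (or a simple combination of $a^{p}, (1-a)^p$ and low powers of $a$) whose sign is controlled by the factor $(p-2)$ together with a manifestly signed remaining factor. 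The cleanest target would be an identity of the shape
\[
W(a) = c(p)\,[a(1-a)]^{p/2 - 2}\,(p-2)\,\big(\text{something} \geq 0\big)\,,
\]
with $c(p)$ of a definite sign, from which the dichotomy at $p = 2$ is immediate. I would double-check the exceptional exponents ($p$ near $0,1,2$) and the endpoint behavior, and confirm strictness (the ``something'' vanishing only at $a = 1/2$, which corresponds to $b$ at its extreme value, consistent with the later equality analysis).

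The main obstacle I anticipate is purely the bookkeeping of signs: $\dot b$ changes sign across $a = 1/2$, so $\dot b^3$ does too, and one must be careful to pair this correctly with the sign of $W(a)$ so that the quotient $H''(b)$ comes out with a single, branch-independent sign — the symmetry in $a \leftrightarrow 1-a$ should make the two branches give the same conclusion, but this needs to be verified rather than assumed. A secondary nuisance is that $p$ ranges over all reals (including negative $p$ and $p \in (0,1)$), so the powers $a^{p-1}$, $[a(1-a)]^{p/2-2}$ etc.\ may be large or singular near the endpoints $a \in \{0,1\}$; since the lemma is a statement about $H$ on the open interior of the $b$-range this is not a real problem, but the simplification should be organized so that no spurious sign changes are introduced by dividing through by such factors. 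Once $W(a)$ is in the factored form above, the proof is essentially finished.
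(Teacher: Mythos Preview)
Your overall strategy---parametrize by a single variable, compute $H''(b)$ via the chain rule as a Wronskian over $\dot b^{3}$, and analyze signs---is exactly what the paper does; the paper simply chooses the cleverer parametrization $e^{2x}=a/(1-a)$, which turns $h$ and $b$ into hyperbolic expressions and greatly cleans up the algebra.

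The gap in your plan is the expected ``clean'' factorization $W(a)=c(p)\,[a(1-a)]^{p/2-2}(p-2)\,(\text{something}\ge 0)$. It does not happen. Carrying out your computation (with $u=a$, $v=1-a$, $u+v=1$) one gets
\[
W(a)=\tfrac{p^2}{2}\,(uv)^{p/2-2}\Big[\tfrac{p-2}{2}\,(u^{p-1}-v^{p-1})-(p-1)\,uv\,(u^{p-2}-v^{p-2})\Big],
\]
and the bracket is a \emph{difference} of two terms that, for $u>v$, each have sign $(p-1)(p-2)$. No factor of $(p-2)$ pulls out; you must actually compare the two terms. If you substitute $e^{2x}=u/v$ the bracket becomes, up to a positive factor,
\[
(p-1)\tanh x-\tanh\big((p-1)x\big),
\]
and the sign of this (positive for $p-1>1$ or $-1<p-1<0$, negative otherwise) is the genuine content of the lemma. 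This is the step the paper isolates and proves by a short convexity argument in the variable $t=p-1$ (noting that $f(t)=t\tanh x-\tanh(tx)$ has $f(-1)=f(0)=f(1)=0$ and $f''$ changes sign only at $t=0$). The final sign of $H''$ is then obtained as a \emph{product} of this sign with the sign of $\dot b$ (equivalently $db/dx$), and the sign changes at $p=0,1$ in each factor cancel so that the product switches only at $p=2$.

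So your outline is correct and essentially the paper's, but the crucial inequality you have to establish is not ``manifest''; it is precisely the $\tanh$ comparison above, and your parametrization by $a$ will make it harder to see than the paper's hyperbolic substitution.
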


\begin{proof}
To prove this lemma we use the chain rule to compute the second derivative 
of $H$.
As a first step we define a useful reparametrization as follows: $e^{2x} := 
a/(1-a)$. A quick computation shows that
$h= (2 \cosh x)^{-p}$ and  $b= 2 \cosh (px) (2 \cosh x)^{-p}$. Thus,
$h= b/(2\cosh (px))$. By symmetry, we can restrict our attention to the 
half-line $x\geq 0$.

We now compute the first two derivatives:

\begin{eqnarray}
db/dx &= &  2^{1-p} p \frac{ \sinh ((p-1)x ) } { (\cosh x)^{p+1} 
}\label{first}  \\
dh/dx & = & -p \frac{\tanh x }{(\, 2 \cosh x \, )^p }\\
(dH/db)(x) & = & \frac{dh/dx}{db/dx} = - \frac{\sinh x}{2 \sinh ((p-1)x) }\\
(d/dx)(dH/db)(x) & = & \cosh(x) \frac{(p-1)\tanh x -
\tanh((p-1) x ) 
}{2\sinh((p-1)x)\ \tanh(
(p-1)x)} \label{second} \\
(d^2H/db^2)(x) & = & \frac{(d/dx)(dH/db)(x)}{db/dx} \label{third}
\end{eqnarray}
Our goal is to show that \eqref{third} has the correct sign (depending on $p$) for 
all $x\geq0$. 

Clearly, the quantity \eqref{first} is nonpositive for $p\in (0,1]$ and 
nonnegative elsewhere.  
We claim that the quantity \eqref{second} is nonpositive for $p\in (-\infty,0]\cup [1,2]$ and nonnegative elsewhere. In fact, the denominator is always 
positive. For the numerator we write $t=p-1$ and use the fact that for all 
$x> 0$, $t\tanh(x) - \tanh (tx) > 
0 $
for $t > 1 $ and for $-1< t < 0$, while the 
inequality reverses, and is strict for other values of $t$ except $t\in \{-1,0,1\}$.

To see this, fix $x>0$, and define 
$f(t) := t\tanh(x) - \tanh (tx)$. Evidently $f(t) = 0$ for $t=-1,0,1$. Then, since $f''(t) = 2x^2\sinh(tx)/\cosh^3(tx)$, $f''(t) > 0$ for $t>0$, and $f''(t) < 0$ for $t<0$. It follows that $f(t) > 0$ for $-1 < t < 0$ and $t>1$, while $f(t) < 0$ for $0 < t < 1$ and $t < -1$.

According to \eqref{third} the  products of the signs of \eqref{first} and  \eqref{second} yield the 
strict convexity/concavity properties of $H(b) $ shown in rows $2$ to $4$  of the table below. 
\end{proof}

\begin{center}
\begin{tikzpicture}
\draw [ultra thick] (0,0) -- (0,6);
\draw [ultra thick] (2,0) -- (2,6);
\draw [ultra thick] (4,0) -- (4,6);
\draw [ultra thick] (6,0) -- (6,6);
\draw [ultra thick] (8,0) -- (8,6);
\draw [ultra thick] (10,0) -- (10,6);

\draw [ultra thick] (0,0) -- (10,0);
\draw [ultra thick] (0,1) -- (10,1);
\draw [ultra thick] (0,2) -- (10,2);
\draw [ultra thick] (0,3) -- (10,3);
\draw [ultra thick] (0,4) -- (10,4);
\draw [ultra thick] (0,5) -- (10,5);
\draw [ultra thick] (0,6) -- (10,6);

\node [right] at (2.4,5.5,0) {{\footnotesize$p<0$}};
\node [right] at (4.15,5.5,0) {{\footnotesize$0 <p<1$}};
\node [right] at (6.15,5.5,0) {{\footnotesize$1 <p<2$}};
\node [right] at (8.4,5.5,0) {{\footnotesize$p>2$}};

\node [right] at (0.6,4.5,0) {{\footnotesize $\tfrac{db}{dx}$}};
\node [right] at (2.6,4.5,0) {{\footnotesize $\geq 0$}};
\node [right] at (4.6,4.5,0) {{\footnotesize $\leq 0$}};
\node [right] at (6.6,4.5,0) {{\footnotesize $\geq 0$}};
\node [right] at (8.6,4.5,0) {{\footnotesize $\geq 0$}};

\node [right] at (0.2,3.5,0) {{\footnotesize $\tfrac{d}{dx}\left(\tfrac{dH}{db}\right)$}};
\node [right] at (2.6,3.5,0) {{\footnotesize $\leq 0$}};
\node [right] at (4.6,3.5,0) {{\footnotesize $\geq 0$}};
\node [right] at (6.6,3.5,0) {{\footnotesize $\leq 0$}};
\node [right] at (8.6,3.5,0) {{\footnotesize $\geq 0$}};

\node [right] at (0.5,2.5,0) {{\footnotesize $H(b)$}};
\node [right] at (2.2,2.5,0) {{\footnotesize concave}};
\node [right] at (4.2,2.5,0) {{\footnotesize concave}};
\node [right] at (6.2,2.5,0) {{\footnotesize concave}};
\node [right] at (8.3,2.5,0) {{\footnotesize convex}};

\node [right] at (0.2,1.5,0) {{\footnotesize $p(p-1)$}};
\node [right] at (2.6,1.5,0) {{\footnotesize $\geq 0$}};
\node [right] at (4.6,1.5,0) {{\footnotesize $\leq 0$}};
\node [right] at (6.6,1.5,0) {{\footnotesize $\geq 0$}};
\node [right] at (8.6,1.5,0) {{\footnotesize $\geq 0$}};

\node [right] at (0.1,0.5,0) {{\footnotesize Direction}};
\node [right] at (2.7,0.5,0) {{\footnotesize $\geq$}};
\node [right] at (4.7,0.5,0) {{\footnotesize $\leq$}};
\node [right] at (6.7,0.5,0) {{\footnotesize $\geq$}};
\node [right] at (8.7,0.5,0) {{\footnotesize $\leq$}};
\end{tikzpicture}
\end{center}

\centerline{Fig. 6: Table of signs determining the direction of the main
inequality \eqref{maineq}.}

\begin{lemma}[constant $b(\alpha(x))$]\label{constb} For $f,g \geq 0$, define $\alpha(x) = f(x)/(f(x) + g(x))$.
Then with $b(\alpha(x))$ is almost everywhere constant if $\max\{\alpha(x),1-\alpha(x)\}$ is constant almost everywhere, which is true if and only if  only if either $f$ and $g$ have essentially disjoint support, or else $\tilde{f}(x) := \max\{f(x),g(x)\}$ and $\tilde{g}(x) := 
\min\{f(x),g(x)\}$ are proportional.
\end{lemma}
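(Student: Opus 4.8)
The plan is to deduce both assertions of Lemma~\ref{constb} from a single pointwise identity, so that essentially no analysis is involved beyond bookkeeping around a null set.

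First I would dispose of the clause ``$b(\alpha(x))$ is almost everywhere constant if $\max\{\alpha(x),1-\alpha(x)\}$ is almost everywhere constant'' in one line: since $b(a)=a^{p}+(1-a)^{p}$ is invariant under $a\mapsto 1-a$, the number $b(\alpha(x))$ depends on $\alpha(x)$ only through the unordered pair $\{\alpha(x),1-\alpha(x)\}$, and that pair equals $\{c,1-c\}$ exactly when $\max\{\alpha(x),1-\alpha(x)\}=c$. Thus ``$\max\{\alpha,1-\alpha\}$ is a.e.\ constant'' and ``the unordered pair $\{\alpha,1-\alpha\}$ is a.e.\ constant'' are the same hypothesis --- precisely the one appearing in Proposition~\ref{conred} --- and either of them forces $b(\alpha(x))=c^{p}+(1-c)^{p}$ a.e. No restriction on $p$ is needed here.

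It then remains to prove the stated equivalence for the hypothesis itself. I would work on $S:=\{x:f(x)+g(x)>0\}$; on $S^{c}$ one has $f=g=\tilde f=\tilde g=0$ and $\alpha$ is irrelevant (the measure $(f+g)^{p}\,dx/\|f+g\|_{p}^{p}$ used in Part~B gives $S^{c}$ zero mass). Since $\tilde f+\tilde g=f+g$, on $S$ we have the identity
\[
\max\{\alpha(x),1-\alpha(x)\}=\frac{\tilde f(x)}{\tilde f(x)+\tilde g(x)},
\qquad
\min\{\alpha(x),1-\alpha(x)\}=\frac{\tilde g(x)}{\tilde f(x)+\tilde g(x)},
\]
from which both directions are immediate. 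If $\max\{\alpha,1-\alpha\}=c$ a.e., then $\tilde g/(\tilde f+\tilde g)=1-c$ a.e.\ on $S$: when $c=1$ this gives $\tilde g=0$ a.e., i.e.\ $f$ and $g$ have essentially disjoint supports; when $c<1$ it gives $\tilde g=\tfrac{1-c}{c}\tilde f$ a.e.\ on $S$, and since both sides vanish on $S^{c}$ the proportionality $\tilde g=\tfrac{1-c}{c}\tilde f$, with $\tfrac{1-c}{c}\in(0,1]$, holds a.e.\ on the whole space. Conversely, if $f$ and $g$ have essentially disjoint supports then $\tilde g=0$ a.e.\ and $\max\{\alpha,1-\alpha\}=1$ a.e.; and if $\tilde g=\lambda\tilde f$ a.e.\ for a finite constant $\lambda\in[0,1]$ with $\tilde f\not\equiv 0$, then on $S$ (where $\tilde f>0$) one reads off $\max\{\alpha,1-\alpha\}=1/(1+\lambda)$ a.e., a constant. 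This closes the loop.

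I do not anticipate a genuine obstacle: the only care required is in handling the null set $S^{c}=\{f+g=0\}$ and in fixing the meaning of ``proportional'' --- namely a finite nonnegative constant $\lambda$ with $\tilde g=\lambda\tilde f$ a.e., the essentially-disjoint-support alternative being the case $\lambda=0$, so that the two alternatives in the statement overlap and either wording is acceptable. It is also worth noting, though not needed for this Lemma, that for $p\neq 0,1$ the map $b$ is strictly monotone on $[1/2,1]$ --- visibly from $b'(a)=p\bigl(a^{p-1}-(1-a)^{p-1}\bigr)$ together with the symmetry of $b$ about $1/2$ --- so that in that range $b(\alpha(x))$ being a.e.\ constant is in fact equivalent to, not merely implied by, $\max\{\alpha(x),1-\alpha(x)\}$ being a.e.\ constant.
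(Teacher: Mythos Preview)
Your proposal is correct and follows essentially the same approach as the paper: both arguments rest on the symmetry $b(a)=b(1-a)$ together with the pointwise identity $\max\{\alpha,1-\alpha\}=\tilde f/(\tilde f+\tilde g)$ on $\{f+g>0\}$, then split into the cases $c=1$ (disjoint supports) and $c<1$ (proportionality). Your write-up is arguably a bit cleaner in that it separates the trivial implication ``$\max\{\alpha,1-\alpha\}$ constant $\Rightarrow$ $b(\alpha)$ constant'' from the characterization of the constancy hypothesis, and it handles the null set $\{f+g=0\}$ more explicitly; the paper instead organizes the argument around the possible values of $b$ and implicitly uses the strict monotonicity of $b$ on $[1/2,1]$ that you note at the end.
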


\begin{proof} Let  $F = \{x:\ f(x) > 0\}$ and $G = \{x:\ g(x) > 0\}$.  Then $\alpha(x) = 1$ on $F\backslash G$, and
$1- \alpha(x) = 1$ on $G\backslash F$. If the measure of $F\cap G$ is zero, $b(\alpha(x)) =1$ almost everywhere with respect to $(f(x)+g(x)){\rm d}x$. Conversely, since $b(a) =1$ if and only if $a\in\{0,1\}$, if 
$b(\alpha(x)) =1$ almost everywhere, then almost everywhere $\alpha(x)\in\{0,1\}$, which means that $f$ and $g$ have essentially disjoint supports.

For $b\in [2^{1-p},1)$, there is a unique $a\in [1/2,1)$ such that $b(a) = b$. Therefore, if $b(\alpha(x))=b \in [2^{1-p},1)$, there is a unique $a\in [1/2,1)$ such that $\alpha(x) \in \{a,1-a\}$ almost everywhere, and this is
the case if and only if $\max\{\alpha(x),1-\alpha(x)\} = \tilde{f}(x)/(\tilde{f}(x) + \tilde{g}(x)) =a$ almost everywhere.
\end{proof}

\begin{proof}[Proof of Proposition \ref{conred}]

Consider the ratio in \eqref{uno}.  The numerator is the integral
$ \int H(b(\alpha(x)))$. By Jensen's 
inequality (recalling that $\int 1 = 1$) and the convexity/concavity of $H$ in Lemma \ref{con},  this 
integral is bounded from below
by $H(B) $ in the convex case and from above in the concave case, where
\begin{equation}
 B := \int (\alpha^p + (1-\alpha)^p )\ .
\end{equation}

That is, 
\begin{equation}\label{jenin}
 \frac{\int H(b(y))}{B} \geq \frac{ H(B) }{B}
\end{equation}
for $p \geq 2$, while the reverse is true for $p\leq 2$. 
 Moreover, by the strict convexity/concavity of $H(b)$, the inequality in \eqref{jenin} is strict unless $b(\alpha(x))$ is constant when $p\notin\{0,1,2\}$.  By the first part of Lemma~\ref{constb}, $b(\alpha(x))$ is constant if and only if $\max\{\alpha(x),1-\alpha(x)\}$ is a constant, necessarily belonging to $[1/2,1]$.
Then, taking into account the signs of $2/p$ and $p-1$ in the various 
ranges, 
$$
\left( 1+ \left (\frac{2\int H(b(y)) {\rm d}y}{B} \right)^{2/p} \right)^{p-1}  \geq \left( 1+ \left (\frac{2H(B)}{B} \right)^{2/p} \right)^{p-1}
$$
for $p\in   (0,1] \cup [2,\infty)$, with the reverse in 
equality for $p\in (-\infty,0) \cup [1,2]$.    The last two rows in Fig.~6  summarize the interaction of the convexity/concavity properties of $H(b)$ and the signs of the exponents $p/2$ and $p-1$ in the direction of the inequality in \eqref{duo} for the different ranges of $p$, 
and taking into account the cases of equality discussed above, this yields the result as stated. 

Thus, it suffices for us to prove
\begin{equation}\label{duo}
  1 \leq \left( 1+ \left (\frac{2H(B)}{B} \right)^{2/p} \right)^{p-1} B
\end{equation}
for $p\in [0,1] \cup [2,\infty]$ and the reverse inequality for $p\notin [0,1] \cup [2,\infty]$. 
  We do not know what the number $B$ is, but that does not matter. In each case the range of $b$ is an interval and, therefore, the average value $B$ lies in this same interval. Consequently, whatever $B$ might be, there is a number $\alpha$ such that 
  $B= \alpha^p +(1-\alpha)^p$. (Note that it is not claimed that this number $\alpha$ is related in any particular way to the function $\alpha (x)$.)
\end{proof}

\begin{proof}[Proof of Theorem~\ref{main}]  This is immediate from Proposition~\ref{conred} and Theorem~\ref{meansthm}.
\end{proof}

\bigskip 



\section{Part C.\ Proof of  Theorem~\ref{meansthm}}
\def\a{\alpha}

\subsection{Proof of the inequality}

First we prove the inequality
\begin{align}\label{utoloba}
 (\a^{p}+(1-\a)^{p})\left( 1+\left(\frac{2\a^{p/2}(1-\a)^{p/2}}{\a^{p}+(1-\a)^{p}}\right)^{2/p}\right)^{p-1}\geq 1
\qquad\text{for all}\ \alpha\in(0,1)
\end{align}
if $p \in [0,1]\cup [2,\infty)$, and the reverse inequality if $p \in (-\infty, 0] \cup [1,2]$.

\vskip0.5cm
For $p>0$, there is evidently equality for $\a\in \{0, \tfrac12,1\}$, and for $p<0$, there is equality for $\alpha=1/2$. Thus for the proof of \eqref{utoloba} it suffices to consider $\alpha\in(1/2,1)$ for $p>0$, and $\alpha \in (0,1/2)$ if $p<0$,   and it is convenient to change variables
$$
t:=\left(\frac{1-\a}{\a}\right)^{p} \in (0,1]
\qquad\text{and}\qquad
c:=1/p \,.
$$
Moreover, for fixed $c$ we introduce the function
$$
f(t):=-\frac{1}{c} \ln (1+t^{c}) + \ln(1+t)+ \frac{1-c}{c}\ln\left(1+ \left(\frac{4t}{(t+1)^{2}}\right)^{c}\right).
$$
By taking logarithms we see that the claimed inequality \eqref{utoloba} is equivalent to
$$
f(t) \geq 0
\qquad\text{for}\ t\in(0,1)
$$
if $p\in[0,1]\cup[2,\infty)$ (that is, $c\in(0,1/2]\cup[1,\infty)$), and the reverse inequality in \eqref{utoloba} is equivalent to the reverse inequality if $p \in (-\infty, 0] \cup [1,2]$ (that is, $c\in(-\infty,0]\cup[1/2,1]$). We shall show that for $c>0$ the derivative $f'$ has a unique sign change in $(0,1)$ and it changes sign from $+$ to $-$ if $c\in(0,1/2)\cup (1,\infty)$ and from $-$ to $+$ if $c\in(1/2,1)$. Moreover, for $c<0$ we shall show that the derivative $f'$ is positive on $(0,1)$.

Since $f(0)=f(1)=0$ for $c>0$, this proves that $f\geq 0$ if $c\in(0,1/2)\cup (1,\infty)$ and that $f\leq 0$ if $c\in(1/2,1)$. Moreover, since $f(1)=0$ for $c<0$, this proves that $f\leq 0$ if $c<0$. Thus, we have reduced the proof of Theorem \ref{meansthm} to proving the above sign change properties of $f'$.

In order to discuss the sign changes of $f'$ we compute
\begin{align}\label{der}
f'(t) = \frac{(1-c)(1-t)}{t(1+t)}\left( \frac{1}{\left(\frac{(1+t)^{2}}{4t}\right)^{c}+1} - \frac{t^{c}-t}{(1-c)(t^{c}+1)(1-t)} \right).
\end{align}
Clearly, it suffices to consider the sign changes of the second factor and therefore to consider the sign changes of
\begin{align}\label{sder}
g(t) := \left(\frac{(1+t)^{2}}{4t}\right)^{c}-\left(\frac{(1-c)(t^{c}+1)(1-t)}{t^{c}-t}-1\right).
\end{align}
We shall show that for $c>0$, $g$ has a unique sign change in $(0,1)$ and it changes sign from $-$ to $+$ if $c\in(0,1/2)$ and from $+$ to $-$ if $c\in(1/2,\infty)$. Moreover, for $c<0$ we shall show that $g$ is negative on $(0,1)$. Clearly, these properties of $g$ imply the claimed properties of $f'$ and therefore will conclude the proof.

We next observe that the second term in \eqref{sder} is positive.

\begin{lemma}\label{fraction}
For any $c\in\R\setminus\{1\}$ and $t\in(0,1)$,
$$
\frac{(1-c)(t^{c}+1)(1-t)}{t^{c}-t}>1 \,.
$$
\end{lemma}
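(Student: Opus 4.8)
The plan is to strip away the algebra with a hyperbolic substitution and then reduce the whole statement to the single elementary fact that $u\mapsto\sinh(u)/u$ is strictly increasing on $(0,\infty)$.

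First I would substitute $t = e^{-2y}$ with $y>0$, in the spirit of the reparametrization $a/(1-a)=e^{2x}$ already used in the proof of Lemma~\ref{con}. One computes $1-t = 2e^{-y}\sinh y$, $\ 1+t^{c} = 2e^{-cy}\cosh(cy)$, and $\ t^{c}-t = 2e^{-(c+1)y}\sinh((1-c)y)$, so that after cancellation
\[
\frac{(1-c)(t^{c}+1)(1-t)}{t^{c}-t} = \frac{2(1-c)\,\cosh(cy)\,\sinh y}{\sinh((1-c)y)} .
\]
The numerator $(1-c)\cosh(cy)\sinh y$ and the denominator $\sinh((1-c)y)$ always have the same sign (namely that of $1-c$), so the ratio is positive and ``it exceeds $1$'' is equivalent to $2|1-c|\cosh(cy)\sinh y > \sinh(|1-c|y)$ for all $y>0$. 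Writing $d:=|1-c|>0$, so that $c=1\pm d$ and $\cosh(cy)=\cosh((1\pm d)y)$, and using that $\cosh$ is even and increasing on $[0,\infty)$ with $|1-d|\le 1+d$, the case $c=1+d$ follows from the case $c=1-d$. Hence it suffices to prove
\[
2d\,\cosh((1-d)y)\,\sinh y > \sinh(dy)\qquad\text{for all } y>0,\ d>0 .
\]

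Next I would apply the product-to-sum identity $2\cosh((1-d)y)\sinh y = \sinh((2-d)y)+\sinh(dy)$, which recasts the goal as
\[
d\,\sinh((2-d)y) > (1-d)\,\sinh(dy) .
\]
For $1\le d\le 2$ the left side is $\ge 0$ and the right side is $\le 0$, and they cannot both vanish, so this holds. For $0<d<1$ one has $dy<(2-d)y$, so strict monotonicity of $\sinh(u)/u$ gives $\sinh((2-d)y)/\sinh(dy) > (2-d)/d$, whence $d\,\sinh((2-d)y) > (2-d)\sinh(dy) > (1-d)\sinh(dy)$. For $d>2$ the claim is equivalent to $d\,\sinh((d-2)y) < (d-1)\sinh(dy)$; since $(d-2)y<dy$, the same monotonicity gives $\sinh((d-2)y)/\sinh(dy) < (d-2)/d$, whence $d\,\sinh((d-2)y) < (d-2)\sinh(dy) < (d-1)\sinh(dy)$. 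Strictness throughout is guaranteed by strictness of the monotonicity of $\sinh(u)/u$, whose derivative has numerator $u\cosh u-\sinh u>0$ for $u>0$. The value $c=0$ is just $d=1$, where the inequality reads $2\sinh y>\sinh y$.

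The one place that needs care is the bookkeeping of signs: one must observe that the numerator $(1-c)(t^{c}+1)(1-t)$ and the denominator $t^{c}-t$ share a sign (both positive for $c<1$, both negative for $c>1$), so that ``the fraction exceeds $1$'' becomes ``the numerator dominates the denominator in absolute value'', and at the last step one must split the parameter $d$ at the thresholds $1$ and $2$, where $1-d$ and $2-d$ change sign. Apart from this, the argument uses no calculus beyond the monotonicity of $\sinh(u)/u$.
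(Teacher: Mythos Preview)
Your proof is correct, and it takes a genuinely different route from the paper's. The paper argues directly from convexity/concavity of power maps: for $c\in[0,1)$ it uses concavity of $t\mapsto t^{c}$ to get $(1-c)(1-t)\ge t^{c}-t$, and then the extra factor $t^{c}+1>1$ finishes; for $c>1$ the same argument runs with convexity; for $c<0$ it uses convexity of $t\mapsto t^{1-c}$ after a small rewriting. Each case is a two-line tangent-line inequality, with no change of variables.

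Your argument instead transplants the problem into hyperbolic coordinates via $t=e^{-2y}$, collapses everything to the single inequality $d\,\sinh((2-d)y)>(1-d)\,\sinh(dy)$, and then invokes only the strict monotonicity of $u\mapsto\sinh(u)/u$. What this buys is unification: one analytic fact handles all $c$, and the connection to the hyperbolic parametrization of Lemma~\ref{con} is made explicit. What the paper's proof buys is brevity and elementarity: no substitution, no product-to-sum identities, and the convexity used is the most familiar kind. Your case split at $d=1,2$ is the price for the unified viewpoint, but the bookkeeping is clean and the strictness is clear throughout.
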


\begin{proof}
First, consider the case $c\in[0,1)$. Then concavity of the map $t\mapsto t^{c}$ implies $1-c+ct-t^{c} \geq 0$, therefore $\frac{(1-c)(1-t)}{t^{c}-t}\geq 1$, and the claim follows from $t^{c}+1>1$.

Next, for $c>1$ the argument is similar using convexity of the map $t\mapsto t^c$.

Finally, for $c<0$ convexity of $t \mapsto t^{1-c}$ implies that
\begin{align*}
\frac{(1-c)(t^{c}+1)(1-t)}{t^{c}-t}-1 = \frac{(1-c)(1+t^{-c})(1-t)}{1-t^{1-c}}-1 > \frac{(1-c)(1-t)}{1-t^{1-c}}-1 \geq 0 \,.
\end{align*}
This concludes the proof of the lemma.
\end{proof}

Because of Lemma \ref{fraction}, we can define
\begin{align}\label{lder}
h(t) := c \ln \left(\frac{(1+t)^{2}}{4t}\right) - \ln \left(\frac{(1-c)(t^{c}+1)(1-t)}{t^{c}-t}-1\right).
\end{align}
We shall show that for $c>0$, $h$ has a unique sign change in $(0,1)$ and it changes sign from $-$ to $+$ if $c\in(0,1/2)$ and from $+$ to $-$ if $c\in(1/2,\infty)$. Moreover, for $c<0$ we shall show that $h$ is negative on $(0,1)$. Clearly, these properties of $h$ imply the claimed properties of $g$ and therefore will conclude the proof.

We will prove this by investigating sign changes of $h'$. Namely, we shall show that for $c>0$, $h'$ has a unique sign change in $(0,1)$ and it changes sign from $+$ to $-$ if $c\in(0,1/2)$ and from $-$ to $+$ if $c\in(1/2,\infty)$. Moreover, for $c<0$ we shall show that $h'$ is positive on $(0,1)$.

Let us show that this implies the claimed properties of $h$. Indeed, an elementary limiting argument shows that
$$
h(0) = 
\begin{cases}
-\infty & \text{if}\ c<0 \,, \\
-2c\ln 2 - \ln(1-c) & \text{if}\  c\in(0,1)\,, \\
+\infty & \text{if}\ c>1 \,.
\end{cases}
$$
and
$$
h(1)=0
\qquad\text{for all}\ c \,.
$$
The function $-2c\ln 2 - \ln(1-c)$ is convex on $(0,1)$ and vanishes at $c=0$ and $c=1/2$. From this we conclude that
$$
h(0)<0 \ \text{if}\ c<1/2 \,,
\quad
h(0)=0\ \text{if}\ c=1/2 \,,
\quad
h(0)>0 \ \text{if}\ c>1/2 \,.
$$
Because of this behavior of $h(0)$ and $h(1)$, the claimed properties of $h'$ imply the claimed properties of $h$.

Therefore in order to complete the proof of Theorem \ref{meansthm} we need to discuss the sign changes of $h'$. We compute
\begin{align*}
h'(t) = \frac{v(t)}{(1+t)(t^{c}-t)^{2}\left(    \frac{(1-c)(t^{c}+1)(1-t)}{t^{c}-t}-1  \right)}
\end{align*}
with
$$
v(t) := t(2c^{2}-1)-t^{2}c^{2}+2c(1-2c)(t^{c}-t^{c+1})+t^{2c}(1-2c^{2})+(t^{1+2c}-1)(1-c)^{2}+t^{2c-1}c^{2} \,.
$$
We shall show that for $c>0$, $v$ has a unique sign change in $(0,1)$ and it changes sign from $+$ to $-$ if $c\in(0,1/2)$ and from $-$ to $+$ if $c\in(1/2,\infty)$. Moreover, for $c<0$ we shall show that $v$ is positive on $(0,1)$.

Since, by Lemma \ref{fraction} the denominator in the above expression for $h'$ is positive, these properties of $v$ clearly imply those of $h'$ and therefore complete the proof of the theorem.

In order to prove the claimed properties of $v$ we shall study the sign changes of $v''$. We shall show that for $c>0$, $v''$ has a unique sign change in $(0,1)$ and it changes sign from $+$ to $-$ if $c\in(0,1/2)$ and from $-$ to $+$ if $c\in(1/2,\infty)$. Moreover, for $c<0$ we shall show that $v''$ is positive.

Let us now argue that these properties of $v''$ indeed imply the claimed properties of $v$. We compute
\begin{align}
\label{eq:v2}
v'(t) & =  2c^2-1 - 2c^2 t + 2c(1-2c)( c t^{c-1} - (c+1)t^{c}) + 2c(1-2c^2)t^{2c-1} \notag \\
& \quad\quad\quad+ (1-c)^2(1+2c) t^{2c} + c^2 (2c-1)t^{2c-2},
\notag \\
v''(t) & = 2c\cdot [-c+c(1-2c)(c-1)t^{c-2}-c(1-2c)(c+1)t^{c-1}+(2c-1)(1-2c^{2})t^{2c-2} \notag\\
& \quad \quad \quad +(1-c)^{2}(1+2c)t^{2c-1}+c(2c-1)(c-1)t^{2c-3}],
\end{align}
and finally
\begin{equation}
\label{eq:v3}
v'''(t) = 2c(1-2c)(c-1)t^{c-3} w(t)
\end{equation}
with
$$
w(t) := c(c-2)-(c+1)ct  -2t^{c}(1-2c^{2})+(1-c)(1+2c)t^{c+1}-c(2c-3) t^{c-1} \,.
$$
From these formulas we easily infer that
$$
v(1) = v'(1)=v''(1) = 0 \,,
\qquad
v'''(1) = 2c(1-2c)(c-1)^2 \,.
$$
In particular, $v'''(1)>0$ if $c\in(0,1/2)$ and $v'''(1)<0$ if $c\in(-\infty,0)\cup(1/2,1)\cup(1,\infty)$. This means that $v$ is convex near $t=1$ if $c\in(-\infty,0)\cup(1/2,\infty)$ and concave near $t=1$ if $c\in(0,1/2)$.

Let us discuss the behavior near $t=0$. If $c<1/2$, then $v(t)$ behaves like $t^{2c-1}c^{2}$, so $v(0)=+\infty$, and  $v''(0)>0$. If $c>1/2$, then $v(0)=-(1-c)^{2}$ and $v''(0)<0$.

This behavior of $v$ near $0$ and $1$, together with the claimed sign change properties of $v''$, imply the claimed sign change properties of $v$ and will therefore complete the proof of Theorem \ref{meansthm}.  This is because, for example, if $v$ is convex near $t=1$ with $v(1) = v'(1) = 0$, and $v$ has a single inflection point $t_0 \in (0,1)$, then $v$ is positive on $[t_0,0)$, and $v$ is concave on $(0,t_0)$. 

Thus, we are left with studying the sign changes of $v''$. In order to do so, we need to distinguish several cases. For $c<1$ we will argue via the sign changes of $v'''$, while for $c>1$ we will argue directly.

\bigskip

\emph{Case $c\in(0,1)$.} We want to show that $v''$ changes sign from $+$ to $-$ if $c\in(0,1/2)$ and from $-$ to $+$ if $c\in(1/2,1)$.

Since $v''(0)>0$ if $c\in(0,1/2)$, $v''(0)<0$ if $c\in(1/2,1)$, $v''(1)=0$,  and $v'''(1)>0$, it suffices to show that $v'''$ changes sign only once on $(0,1)$. Because of \eqref{eq:v3} this is the same as showing that $w$ changes sign only once on $(0,1)$. Notice that $w(0)=+\infty$, and  $w(1)=c-1<0$. Moreover,
\begin{align*}
w''(t)=c(1-c)t^{c-3} p(t)
\end{align*}
with
$$
p(t) := t^{2}(c+1)(1+2c)+2t(1-2c^{2})+2c^{2}-7c+6 \,.
$$
The quadratic polynomial $p$ is positive. Indeed, when $c\in(0,1/2)$ this follows from the fact that all its coefficients are positive. When $c\in(1/2,1)$ we observe that the parabola $p$ is minimized on $\R$ at $t=\frac{2c^{2}-1}{(c+1)(1+2c)}$, and its minimal value is  $\frac{(5-3c^{2})+c(11-8c^{2})}{(c+1)(1+2c)}$, which is positive for $c \in (1/2, 1)$. 

The fact that $p$ is positive means that $w$ is convex. Since $w(0)=+\infty$ and $w(1)<0$, we conclude that $w$ has only one root. 

\bigskip

\emph{Case $c\in(-\infty,0)$.} We want to show that $v''$ is positive.

Since $v''(1)=0$, it suffices to show that $v'''$ is negative which, by \eqref{eq:v3}, is the same as showing that $w$ is negative. Clearly, $w(0)=-\infty$, $w''(0)<0$, and $w(1)=c-1<0$, and $w'(1)=(3c-1)(c-1)>0$, so it suffices to show that $w''<0$ on $(0,1)$. For this it suffices to show that $p>0$ on $(0,1)$. We have $p(0)>0$, and $p(1)=9-4c>0$. Thus if $(1+c)(1+2c)\leq 0$ we have proved the claim. Consider the case when $(1+c)(1+2c)>0$. The vertex of the parabola is $t_{0}= \frac{2c^{2}-1}{(c+1)(1+2c)}$. If $c<-1$ then clearly $\frac{2c^{2}-1}{(c+1)(1+2c)}>1$. If $c \in (-1/2, 0)$, then clearly $\frac{2c^{2}-1}{(c+1)(1+2c)}<0$.

\bigskip

\emph{Case $c\in(1,\infty)$.} We want to show that $v''$ changes sign from $-$ to $+$.

We begin with the case $c\in(1,2)$. We write \eqref{eq:v2} as $v''(t)=2c q(t)$ with
\begin{align*}
q(t):=-c+c(1-2c)(c-1)t^{c-2}-c(1-2c)(c+1)t^{c-1}+(2c-1)(1-2c^{2})t^{2c-2}\\
+(1-c)^{2}(1+2c)t^{2c-1}+c(2c-1)(c-1)t^{2c-3}.
\end{align*}
Clearly $q(0)=-\infty$ and $q(1)=0$. It is enough to  show that $q'$ changes sign from  $+$ to $-$.  We have
$$
q'(t)=  t^{2c-4}(2c-1)(c-1) m(t)
$$
with
$$
 m(t) := c(2-c)t^{1-c}+c(c+1)t^{2-c}+2(1-2c^{2})t+(c-1)(1+2c)t^{2}+c(2c-3)\,.
$$
We shall show that $m(t)$  changes sign only once from $+$ to $-$. Clearly $m(0)=+\infty$ and $m''(0)>0$. Next, $m(1)=1-c<0$, and  $m''(1)=(c-1)(c^{2}+2c+2)>0$. Thus it suffices to show $m''>0$ on $(0,1)$. Since $m''(0)>0, m''(1)>0$,  then $m''>0$  will follow from $m'''$ having the constant sign. We have
\begin{align*}
m'''(t) = t^{-c-2} c^{2}(c-1)(c-2)(c+1)(1-t) <0.
\end{align*}
This finishes the case $c \in (1,2)$. 

If $c=2$, then $q(t) = (t-1)(5t^{2}-16t+8)$, and we see that it changes sign only once. 

In what follows we assume $c>2$. Let us rewrite \eqref{eq:v2} as $v''(t) = 2ct^{2c-3} u(t)$ with
\begin{align*}
u(t) & := -ct^{3-2c}+c(1-2c)(c-1)t^{1-c}-c(1-2c)(c+1)t^{2-c}+(2c-1)(1-2c^{2})t \\
& \quad\ +(1-c)^{2}(1+2c)t^{2}+c(2c-1)(c-1) \,.
\end{align*}
We need to show that $u$ changes sign only once.  We have $u(0) = -\infty$, and $u''(0)<0$. At the point $t=1$, we have $u(1)=0$, $u'(1)=-(2c-1)(c-1)^{2}<0$, $u''(1)=-2(2c-1)(c-1)^{2}<0$. It suffices to show that $u''<0$ on $(0,1)$. Since $u''(0)<0, u''(1)<0$, the latter claim will follow from showing that $u'''$ has a constant sign. We have 
\begin{align*}
u'''(t) = t^{-2-c}c(2c-1)(c-1) b(t)
\end{align*}
with
$$
b(t) := c^{3}-c-tc(c+1)(c-2)+2t^{2-c}(2c-3) \,.
$$
The factor $b$ has the property that $b(0)=+\infty$, $b(1)=(c+6)(c-1)>0$. On the other hand,
$$
b'(t) = -c(c+1)(c-2)t^{1-c}\left( t^{c-1}+\frac{2(2c-3)}{c(c+1)}\right)
$$
is negative, so $b$ is positive.

\bigskip

This concludes the proof of the inequality of Theorem \ref{meansthm}.

\subsection{Sharpness of the exponent $2/p$}

The sharpnes of the exponent $2/p$ is easily checked using the variables introduced in Remark~\ref{alter}. If one rpelaces the power of $2/p$ in \eqref{constantA} and kames the 
transforations described there, one is led to the function
 \begin{equation}\label{critq}
 g_{r,p}(s) :=    \eta^{\frac{1}{p-1}}(s)\left( 1 + \left(\frac{1-s}{\eta^{\frac2p}(s)}\right)^r\right) -2\ .
 \end{equation}
 instead of $f_p(s)$. 
A motivation for this reparametrization is that for fixed $p$, the function on the right hand side of  \eqref{constantA} is equal to $1$ up to order ${\mathcal O}((\alpha -1/2)^4)$ at $\alpha=1/2$. In the variable $s$, the leading term in Taylor expansion in $s$ will be second order, and we proves the sharpness by an expansion at this point.

\begin{proof}[Proof of the second paragraph of Theorem~\ref{meansthm}]
 For fixed $r>0$, define the function $ g_{r,p}(s)$ by 
 \eqref{critq}.  By the arithmetic-geometric mean inequality, $(1-s)^{p/2} \leq \eta(s)$ for all $p$, and hence
 $(1-s)/\eta^{2/p}(s) <1$ for $p> 0$, while  $(1-s)/\eta^{2/p}(s) >1$ for $p < 0$. Therefore, for fixed $s$ and $p$, $g_{r,p}(s)$ decreases as $r$ increases for $p > 0$, and does the opposite for $p<0$. 
 
 A Taylor expansion shows that
 $$g_{r,p}(s) =  p(1-r) s + o(s)\ .$$
 It follows that $g_{r,p}(s) \geq 0$ on $[0,1]$ is false (near $s=0$) for $p>2$ and $r> 1$, and for $p < 0$ and $r < 1$. Likewise, it follows that $g_{r,p}(s) \leq 0$ on $[0,1]$ is false  for $p\in (0,2)$ and $r<1$.
 Since the exponent $q$ in \eqref{constantB} corresponds to $r(2/p)$, this together with the reamrks leading to \eqref{toshow}  justifies the statements referring to $q$ in Theorem~\ref{meansthm}. 
 
Consideration of the argument shows that for $p>0$, $p\neq 1,2$, there is equality if and only if
 $\alpha\in \{0, 1/2,1\}$ and for $p < 0$, if and only if $\alpha =1/2$.
 \end{proof}

The proof of Theorem~\ref{meansthm} is now complete.   By what has been explained above, 
the inequality of Theorem~\ref{main} is proved. Concerning the cases of equality, we have seen in Section 3, that for all values of $p$ under consideration, if there is equality then 
$\max\{\alpha(a), 1-\alpha(x)\}$ is constant, and then by what has just been proved here, and in Lemma~\ref{constb} for $p>0$,
this constant is either $1$, in which case $f$ and $g$ have essentially disjoint support, or $1/2$ in which case $ = g$. For $p < 0$, there is equality only in case $f=g$. Finally, it is evident that there is equality in these cases.


\section{Doubling arguments and a generalization to Schatten norms.}

\subsection{Doubling arguments}\label{sdoubling}

We begin this section with a simple proof showing  that if 
the inequality \eqref{maineq} is valid for some  $p\geq 2$ or some $p<0$, then it is also valid for 
$2$. Since the inequality \eqref{maineq} holds as an identity for $p=2$, and is simple to prove for $p= -1$ (see Remark~\ref{alter}), this yields a simple proof of infinitely many cases of the inequality \eqref{maineq}.  The proof is not only simple and elegant; it applies to certain non-commutative generalizations of \eqref{maineq} for which the reductions in parts A and B of the proof we have just presented are not applicable, 
as we discuss.

To introduce the doubling argument
we present a direct proof of Theorem \ref{main} for $p=4$.

\begin{proof}[Direct proof of Theorem~\ref{main} for $p=4$]  Suppose $f,g\geq 0$, By homogeneity, we may suppose that
$\|f\|_4^4 + \|g\|_4^4 =2$. Define 
\begin{equation}\label{XYdef}
X := fg\ , \quad Y := f^2+g^2\ , \quad \alpha := \|X\|_2 \quad{\rm and}\quad \beta := \|Y\|_2\ .
\end{equation}
By the arithmetic-geometric mean inequality, $X \leq \frac12 Y$, and hence
$$\int X^2{\rm d}\mu \leq \frac14 \int Y^2{\rm d}\mu = \frac14 \int (f^4+ g^4 + 2f^2g^2){\rm d}\mu = \frac12 + \frac 12 
\int X^2{\rm d}\mu\ .$$
This yields $\alpha \leq 1$ and $\beta \leq 2$. 
Then $(f+g)^2 = Y + 2X$ and hence
\begin{equation}\label{p4A}
\|f+g\|_4^2 = \|Y+2X\|_2 \leq \|Y\|_2 + 2\|X\|_2 = \beta +2\alpha\ .
\end{equation}
It suffices to prove that 
$\beta +2\alpha  \leq 2^{1/2} (1 +\alpha)^{3/2}$.
Note that $\beta^2 = \int (f^2+ g^2)^2{\rm d}\mu = 2 + 2\alpha^2$, and then since $\alpha \in [0,1]$.
Thus it suffices to show that 
\begin{equation}\label{p4B}
(1+\alpha^2)^{1/2}    \leq  (1 +\alpha)^{3/2} - 2^{1/2}\alpha \quad{\rm for\ all }\quad  0 \leq \alpha \leq 1\ .
\end{equation}
Squaring both sides, this is equivalent to $1+\alpha^2 \leq (1+\alpha)^3 + 2\alpha^2 - 2^{3/2}\alpha(1+\alpha)^{3/2}$. This reduces to 
$2^{3/2}(1+\alpha)^{3/2} \leq 3 + 4\alpha + \alpha^2$.
Squaring both sides again, this reduces to $(\alpha^2 -1)^2 \geq 0$, completing the proof. 
\end{proof} 

What made this proof work is the fact that the inequality holds for $p=2$ -- as an identity, but that is unimportant. Then, using Minkowski's inequality, as in \eqref{p4A}, together with the numerical inequality \eqref{p4B} we arrive at the inequality for $p=4$.  This is a first instance of the 
general doubling 
proposition, to be proved next.  The inequality \eqref{p4B} is s special case of the general inequality \eqref{p8C2} proved below.

This strategy can be adapted to give direct proof of the inequality for other integer values of $p$; e.g., $p=3$.
When $p$ is an integer, and $f$ and $g$ are non-negative, one has the binomial expansion of $(f+g)^p = f^p + g^p + {\rm mixed\ terms}$.  Under the assumption that $\int (f^p + g^p) =2$, one is left with estimating the mixed terms, and one can use H\"older for this.  When $p$ is not an integer, there is no useful expression for $(f+g)^p - f^p - g^p$.

\begin{proposition}[A ``doubling'' argument]  \label{double} Suppose that 
for some $p\geq 2$, 
\eqref{maineq} is valid for all $f,g\geq 0$.
Then \eqref{maineq} is valid with $p$ replaced by $2p$ for all $f,g\geq 0$.  Likewise, if for some $p < 0$ the reverse of 
\eqref{maineq} is valid for all $f,g > 0$, then  the reverse of \eqref{maineq} is valid with $p$ replaced by $2p$ for all $f,g> 0$.
\end{proposition}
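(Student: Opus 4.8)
\textbf{Proof proposal for Proposition \ref{double} (the doubling argument).}

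The plan is to mimic exactly the structure of the direct proof for $p=4$ given above, with $p=2$ replaced by an arbitrary admissible exponent. Fix $p\geq 2$ and assume \eqref{maineq} holds at $p$. Given $f,g\geq 0$ we wish to prove \eqref{maineq} at exponent $2p$. By homogeneity normalize so that $\|f\|_{2p}^{2p}+\|g\|_{2p}^{2p}=2$, i.e.\ $\int(f^{2p}+g^{2p})=2$. As in \eqref{XYdef}, introduce the auxiliary functions $X:=fg$ and $Y:=(f^{p}+g^{p})^{1/1}$... more precisely set $Y:=f^{p}+g^{p}$ but it is cleaner to work with $\tilde f:=f^{p}$, $\tilde g:=g^{p}$, so that $\tilde f,\tilde g\geq 0$, $\int(\tilde f^{2}+\tilde g^{2})=2$, and $\|fg\|_{p}^{p}=\int \tilde f\,\tilde g$. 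Then $(f+g)^{p}\leq (f^{p}+g^{p})$ is false for $p\geq 1$; instead the right tool is Minkowski at exponent $2$ applied after noting $(f+g)^{2p}=((f+g)^{p})^{2}$ and that $(f+g)^{p}$ is controlled by $\tilde f+\tilde g$ plus cross terms. The key pointwise inequality is that for $p\geq 1$ and $a,b\geq0$, $(a+b)^{p}\leq (a^{p}+b^{p}) + (\text{something involving }ab)$; but to avoid an unavailable binomial expansion we instead run the argument at the level of $L^{2p}$ norms directly: write $\|f+g\|_{2p}^{2}=\|(f+g)^{2}\|_{p}\leq \|f^{2}+g^{2}\|_{p}+2\|fg\|_{p}$ by Minkowski in $L^{p}$ (valid since $p\geq1$), which is the analogue of \eqref{p4A}.

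Now set $\alpha:=\|fg\|_{p}$ and observe $f^{2}+g^{2}$ and $fg$ play the roles they did in the $p=4$ proof. Apply the induction hypothesis \eqref{maineq} at exponent $p$ to the \emph{pair} $f^{2}$, $g^{2}$: this bounds $\|f^{2}+g^{2}\|_{p}^{p}$ by $(1+2^{2/p}\|f^{2}g^{2}\|_{p/2}/(\|f^{2}\|_{p}^{p}+\|g^{2}\|_{p}^{p})^{2/p})^{p-1}(\|f^{2}\|_{p}^{p}+\|g^{2}\|_{p}^{p})$. Here $\|f^{2}g^{2}\|_{p/2}=\|fg\|_{p}^{2}=\alpha^{2}$ and $\|f^{2}\|_{p}^{p}+\|g^{2}\|_{p}^{p}=\int(f^{2p}+g^{2p})=2$, so this reads $\|f^{2}+g^{2}\|_{p}\leq 2^{1/p}(1+\alpha^{2})^{(p-1)/p}$. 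Meanwhile, exactly as in the $p=4$ case, the AGM inequality $fg\leq\tfrac12(f^{2}+g^{2})$ combined with $\int(f^{2}+g^{2})^{p}\,$... gives an a priori bound on the range of $\alpha$, namely $\alpha\leq 1$ (one gets $\|fg\|_{p}^{p}\leq \|\tfrac12(f^{2}+g^{2})\|_{p}^{p}$, and then feeding in $\int(f^{2p}+g^{2p})=2$ together with $(f^2+g^2)^p \le 2^{p-1}(f^{2p}+g^{2p})$ forces $\alpha\le1$). Combining, $\|f+g\|_{2p}^{2}\leq 2^{1/p}(1+\alpha^{2})^{(p-1)/p}+2\alpha$, and it remains to show
$$
2^{1/p}(1+\alpha^{2})^{(p-1)/p}+2\alpha \leq 2^{1/p}(1+\alpha)^{(2p-1)/p}
\qquad\text{for all }0\leq\alpha\leq1,
$$
which is the general inequality the paper calls \eqref{p8C2}; I would reduce it to a polynomial/monotonicity statement by isolating the root term and raising to the power $p/(p-1)$ (or, when $p$ is not an integer, by differentiating in $\alpha$ and checking endpoint values $\alpha=0,1$ where both sides agree up to the right order).

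For the reverse statement with $p<0$: assume the reverse of \eqref{maineq} at exponent $p<0$ for $f,g>0$; we want the reverse at $2p$. The same algebra goes through, but all three inequalities flip in a compatible way: Minkowski in $L^{p}$ for $p<0$ reverses (this is the reverse Minkowski inequality for negative exponents, valid for positive functions), the induction hypothesis is assumed reversed, and the AGM-based a priori bound must be re-examined — for $p<0$ one expects $\alpha\geq1$ instead. Then the concluding numerical inequality \eqref{p8C2} is used in its reversed form on the appropriate range of $\alpha$. The main obstacle I anticipate is twofold: first, being careful with the direction of Minkowski and of the a priori bound on $\alpha$ in the negative-exponent case (signs of exponents like $(p-1)/p$ and $2^{1/p}$ all matter and must be tracked); and second, establishing the one-variable inequality \eqref{p8C2} uniformly in $p$ rather than just for $p=2$ — this is where a clean monotonicity argument (or the substitution from Remark~\ref{alter}) will be needed, and it is the real content beyond the formal "doubling" bookkeeping. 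Everything else is a direct transcription of the $p=4$ computation with $2$ replaced by $p$.
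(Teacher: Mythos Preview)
Your approach is exactly the paper's: normalize to $\|f\|_{2p}^{2p}+\|g\|_{2p}^{2p}=2$, write $\|f+g\|_{2p}^{2}=\|(f+g)^{2}\|_{p}$, apply Minkowski (reversed for $p<0$) to $Y+2X$ with $Y=f^{2}+g^{2}$, $X=fg$, invoke the hypothesis at exponent $p$ on the pair $(f^{2},g^{2})$, and reduce to the one-variable inequality you display. Two small remarks: the a~priori bound on $\alpha=\|fg\|_{p}$ is not needed, since the paper's Lemma~\ref{doubleB} establishes $\psi_{t}(\alpha)=(1+\alpha)^{1+t}-(1+\alpha^{2})^{t}-2^{t}\alpha\geq 0$ for $t\in[0,1]$ and $\leq 0$ for $t>1$ on \emph{all} of $[0,\infty)$ (so neither $\alpha\leq 1$ nor your $\alpha\geq 1$ for $p<0$ is used); and the proof of that lemma is cleaner than differentiation---one writes $\psi_{t}(\alpha)/(\alpha(1-\alpha))$ as the difference of the secant slopes of $x\mapsto x^{t}$ over $[1+\alpha^{2},1+\alpha]$ and $[1+\alpha,2]$ and invokes concavity/convexity.
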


The proof of Proposition~\ref{double} relies on the following lemma.

\begin{lemma}\label{doubleB}
For $t\in \R$, define   $\psi_t$ on $[0,\infty)$ by
\begin{equation}\label{p8C2}
\psi_t(\alpha) = (1 + \alpha)^{1+t} -  (1 + \alpha^2)^{t} - 2^{t}\alpha\ .
\end{equation}
Then for $t\in [0,1]$, $\psi_t(\alpha) \geq 0$ on $[0,\infty)$, while for $t>1$, $\psi_t(\alpha) \leq 0$ on $[0,\infty)$.
\end{lemma}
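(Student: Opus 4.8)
The plan is to turn the problem around: instead of fixing $t$ and chasing derivatives of $\alpha\mapsto\psi_t(\alpha)$ (which produces an awkward three-term expression whose successive derivatives do not simplify), I would freeze $\alpha$ and regard $\psi_t(\alpha)$ as a function of the \emph{parameter} $t$. The point is that, for fixed $\alpha$,
\[
t\longmapsto \psi_t(\alpha)=(1+\alpha)\,e^{t\ln(1+\alpha)}-e^{t\ln(1+\alpha^2)}-\alpha\,e^{t\ln 2}
\]
is a real linear combination of three exponentials $e^{t\lambda}$ whose exponents $\ln(1+\alpha)$, $\ln(1+\alpha^2)$, $\ln 2$ are pairwise distinct exactly when $\alpha\notin\{0,1\}$. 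For $\alpha\in\{0,1\}$ one checks directly that $\psi_t(\alpha)=0$ for all $t$ (indeed $\psi_0$ and $\psi_1$ vanish identically on $[0,\infty)$), so both asserted inequalities hold trivially; henceforth I would fix $\alpha\in(0,1)\cup(1,\infty)$.

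Next I would invoke the classical fact --- provable by induction on $n$ via Rolle's theorem, after dividing through by one of the exponentials --- that a function $\sum_{i=1}^n\mu_i e^{t\lambda_i}$ with distinct $\lambda_i$, not identically zero, has at most $n-1$ real zeros counting multiplicity. Applied with $n=3$, this says that $t\mapsto\psi_t(\alpha)$ has at most two real zeros, provided it is not the zero function. It is not the zero function because $\psi_2(\alpha)=(1+\alpha)^3-(1+\alpha^2)^2-4\alpha=-\alpha(1-\alpha)^2(1+\alpha)\neq 0$ for $\alpha\in(0,1)\cup(1,\infty)$.

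Now I would locate the zeros and read off the signs. Since $\psi_0\equiv 0$ and $\psi_1\equiv 0$, the distinct points $t=0$ and $t=1$ are zeros; as the total multiplicity of the zeros is at least $2$ and at most $2$, these are the \emph{only} zeros and each is simple. Hence $t\mapsto\psi_t(\alpha)$ has a constant nonzero sign on each of $(0,1)$ and $(1,\infty)$ and changes sign across $t=1$. From $\psi_2(\alpha)=-\alpha(1-\alpha)^2(1+\alpha)<0$ (using $2\in(1,\infty)$) that sign is negative on $(1,\infty)$, hence positive on $(0,1)$. Combined with $\psi_0(\alpha)=\psi_1(\alpha)=0$, this gives $\psi_t(\alpha)\ge 0$ for $t\in[0,1]$ and $\psi_t(\alpha)\le 0$ for $t\ge 1$, which is exactly the assertion of the lemma.

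There is no hard estimate in this argument; the only thing one has to \emph{see} is that freezing $\alpha$ trivializes the problem through the exponential-sum structure, after which everything is forced by the two identities $\psi_0\equiv\psi_1\equiv 0$ and the single factorization $\psi_2(\alpha)=-\alpha(1-\alpha)^2(1+\alpha)$. The one place requiring a little care is the bookkeeping in the last step: that two distinct zeros, together with a global bound of two zeros counted with multiplicity, forces both zeros to be simple and rules out any further zero --- this is precisely what legitimizes reading the sign on $(0,1)$ off from the single value $\psi_2(\alpha)$ together with the sign change at $t=1$.
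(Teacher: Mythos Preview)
Your proof is correct and complete; the factorization $\psi_2(\alpha)=-\alpha(1-\alpha)^2(1+\alpha)$ is right, the exponents $\ln(1+\alpha)$, $\ln(1+\alpha^2)$, $\ln 2$ are indeed pairwise distinct for $\alpha\notin\{0,1\}$, and the Descartes--Rolle bound of at most $n-1$ real zeros (with multiplicity) for a nontrivial $n$-term exponential sum is applied cleanly.

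Your route is genuinely different from the paper's. The paper fixes $t$ and works in the variable $\alpha$: after the rearrangement $\psi_t(\alpha)=(1+\alpha)^t-(1+\alpha^2)^t-\bigl(2^t-(1+\alpha)^t\bigr)\alpha$, one divides by $\alpha(1-\alpha)$ and recognizes the result as the difference of two secant slopes of $\varphi(x)=x^t$ at the nodes $a=1+\alpha^2$, $b=1+\alpha$, $c=2$ (noting $b-a=\alpha(1-\alpha)$ and $c-b=1-\alpha$). The sign then follows from concavity of $\varphi$ for $t\in(0,1)$ and convexity for $t>1$, together with the ordering of $a,b,c$ on each side of $\alpha=1$. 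That argument explains \emph{why} the cutoff is at $t=1$: it is where $x^t$ switches from concave to convex. Your argument, by contrast, freezes $\alpha$ and exploits the exponential-sum structure in $t$, so that the two ``obvious'' identities $\psi_0\equiv\psi_1\equiv 0$ already exhaust the zero budget; the role of $t=1$ is then simply that of a forced simple zero. Your approach is arguably slicker and requires no algebraic rearrangement; it also immediately yields the sign of $\psi_t(\alpha)$ on $(-\infty,0)$ for free. The paper's approach, on the other hand, is self-contained (no appeal to the exponential-sum zero bound) and makes the convexity mechanism transparent.
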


\begin{proof}
We write $\psi_t(\alpha)=(1+\alpha)^t - (1+\alpha^2)^t - (2^t - (1+\alpha)^t)\alpha$. Therefore,
$$\frac{\psi_t(\alpha)}{\alpha(1-\alpha)} = \frac{(1+\alpha)^t - (1+\alpha^2)^t}{\alpha(1-\alpha)} - \frac{2^t - (1+\alpha)^t}{1-\alpha}\ .$$
Defining $a := 1+\alpha^2$, $b := 1 +\alpha$ and $c :=2$, and defining $\varphi(\alpha) := x^t$, the right hand side is the same as
$$\frac{\varphi(b) - \varphi(a)}{b-a} - \frac{\varphi(c) - \varphi(b)}{c-b}\ .$$
For $\alpha\in[0,1)$ we have $a<b<c$ and therefore this quantity is positive when $\varphi$ is concave, and negative when $\varphi$ is convex. For $\alpha\in(1,\infty)$ we have $a>b>c$ and therefore this quantity is negative when $\varphi$ is concave, and positive when $\varphi$ is convex.
\end{proof}

\begin{proof}[Proof of Proposition~\ref{double}]
Let $f,g\in L^{2p}$ with $\|f\|_{2p}^{2p} + \|g\|_{2p}^{2p} =2$. 
Define $X := fg$ and $Y := f^2+g^2$, and $\gamma := \|X\|_p$ and $\beta := \|Y\|_p$. 
By the triangle inequality we have
$$
\|f+g\|_{2p}^2 = \|Y+2X\|_p
\begin{cases}
\leq \|Y\|_p+2\|X\|_p = \beta + 2\gamma & \text{if}\ p\geq 2 \,,\\
\geq \|Y\|_p+2\|X\|_p = \beta + 2\gamma & \text{if}\ p<0 \,.
\end{cases}
$$
(Note that the triangle inequality reverses for $p<0$.) We now use the assumption that the inequality \eqref{maineq} is valid for $p$. Applying the inequality with exponent $p$ to the functions $f^2$ and $g^2$, which satisfy $\| f^2 \|_p^p + \|g^2\|_p^p = \|f\|_{2p}^{2p} + \|g\|_{2p}^{2p} =2$, we obtain for $p\geq 2$,
$$
\beta^p = \| f^2 + g^2 \|_p^p \leq 2 \left( 1+ \|f^2 g^2\|_{p/2}\right)^{p-1} = 2\left(1+\gamma^2\right)^{p-1}
$$
and similarly $\beta^p \geq 2\left(1+\gamma^2\right)^{p-1}$ for $p<0$. To summarize, we have shown that
$$
\|f+g\|_{2p}^2
\begin{cases}
\leq 2^{1/p} (1+\gamma^2)^{1-1/p} + 2\gamma & \text{if}\ p\geq 2 \,,\\
\geq 2^{1/p} (1+\gamma^2)^{1-1/p} + 2\gamma & \text{if}\ p<0 \,.
\end{cases}
$$
According to Lemma \ref{doubleB} (with $t=1-1/p$ and $\alpha=\gamma$) this is bounded from above for $p\geq 2$ and from below for $p<0$ by $2^{1/p} (1+\gamma)^{2-1/p} = 2^{1/p} (1+ \|fg\|_p)^{2-1/p}$, which is the claimed inequality.
\end{proof}

\subsection{A generalization to Schatten norms}

For $p\in [1,\infty)$, an operator $A$ on some Hilbert space belongs the Schatten $p$-class ${\mathcal S}_p$ in case $(A^*A)^{p/2}$ is trace class, and the Schatten $p$ norm on
${\mathcal S}_p$ is defined by $\|A\|_p = (\tr[(A^*A)^{p/2}])^{1/p}$.   
One possible non-commutative analog of  (part of) Theorem~\ref{main} would assert that for positive $A,B\in {\mathcal S}_p$, $p > 2$.
\begin{equation}\label{Schatv}
\tr(A+B)^p  \leq \left(1+ \left(
\frac{
\tr[B^{p/4}A^{p/2}B^{p/4}] }
{\tfrac12 \Vert A\Vert_p^p + \tfrac12 \Vert B\Vert_p^p}
\right)^{2/p} 
\right)^{p-1} 
\tr \left(\, A^p + B^p\, \right).
\end{equation}
Note that for $p=2$, \eqref{Schatv} holds as an identity. 

In this setting, it is not clear how to implement analogs of Parts A and B of our proof for functions. However, the direct proofs sketched at the beginning of this section
do allow us to prove the valididty of \eqref{Schatv} for all $p= 2^k$, $k\in \N$. 

\begin{theorem}\label{Schc}  If \eqref{Schatv} is valid for some $p\geq 2$ and all positive $A,B\in {\mathcal S}_p$, then it is valid for $2p$ and all
 $A,B\in {\mathcal S}_{2p}$. In particular, since \eqref{Schatv} holds as an identity for $p=2$, it is valid for $p=2^k$ for all $k\in\N$. 
\end{theorem}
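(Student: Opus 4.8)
The plan is to mimic exactly the doubling argument used for functions in Proposition~\ref{double}, replacing the pointwise arithmetic--geometric mean inequality and H\"older's inequality by their operator/Schatten analogues. So, given positive $A,B \in \mathcal{S}_{2p}$ with (by homogeneity) $\|A\|_{2p}^{2p} + \|B\|_{2p}^{2p} = 2$, I would set $Y := A^2 + B^2$ and, crucially, $X := B^{p/4} A^{p/2} B^{p/4}$ only implicitly through the trace; more precisely, the quantity that appears in \eqref{Schatv} at exponent $2p$ is $\tr[B^{p/2} A^p B^{p/2}]$. First I would write $(A+B)^2 = A^2 + B^2 + (AB + BA) = Y + Z$ where $Z := AB + BA$ is self-adjoint, so $\tr(A+B)^{2p} = \|(A+B)^2\|_p^p = \|Y + Z\|_p^p$. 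The triangle inequality in $\mathcal{S}_p$ (valid for $p \geq 2$, and reversed for the relevant negative-exponent analogue, though here we only need $p \geq 2$) gives $\|Y+Z\|_p \leq \|Y\|_p + \|Z\|_p$.

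Next I would control $\|Y\|_p$ by applying the assumed inequality \eqref{Schatv} at exponent $p$ to the positive operators $A^2$ and $B^2$, which satisfy $\|A^2\|_p^p + \|B^2\|_p^p = \|A\|_{2p}^{2p} + \|B\|_{2p}^{2p} = 2$; this yields $\beta^p := \tr(A^2+B^2)^p \leq 2(1 + \gamma^2)^{p-1}$ where $\gamma^2 := \tr[(B^2)^{p/4}(A^2)^{p/2}(B^2)^{p/4}] = \tr[B^{p/2}A^p B^{p/2}]$, which is precisely the overlap term that should appear in \eqref{Schatv} at exponent $2p$ (note $\|AB\|_{p}^{p} $ would be $\tr(B A^2 B)^{p/2}$, whereas the definition in \eqref{Schatv} uses the symmetrized form $B^{p/4}A^{p/2}B^{p/4}$, and for $A^2,B^2$ this matches $B^{p/2}A^pB^{p/2}$). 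Then I need to bound $\|Z\|_p = \|AB + BA\|_p$. Here I would use that $0 \leq AB + BA \leq A^2 + B^2$ as operators? — that is false in general for the ordering, but what is true and suffices is a trace-norm bound. Actually the cleanest route: since $AB+BA \leq$ is not order-comparable, instead observe $\|AB+BA\|_p \le \|AB\|_p + \|BA\|_p = 2\|AB\|_p$, and then relate $\|AB\|_p$ to $\gamma$. We have $\|AB\|_p^p = \tr((B A^2 B)^{p/2})$ while $\gamma^2 = \tr(B^{p/2}A^p B^{p/2})$; by the Araki--Lieb--Thirring inequality, $\tr((BA^2B)^{p/2}) \le \tr(B^{p/2}A^p B^{p/2})$ for $p \geq 2$, so $\|AB\|_p \le \gamma^{2/p}$, hence $\|Z\|_p \le 2\gamma^{2/p}$.

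Assembling these, $\|(A+B)^2\|_p \le \beta + 2\gamma^{2/p} \le 2^{1/p}(1+\gamma^2)^{1-1/p} + 2\gamma^{2/p}$. Writing $\delta := \gamma^{2/p}$ so that $\gamma^2 = \delta^p$... this does not match the shape of Lemma~\ref{doubleB} directly; I would instead keep $\gamma$ as the variable and need the inequality $2^{1/p}(1+\gamma^2)^{1-1/p} + 2\gamma^{2/p} \le 2^{1/p}(1+\gamma^{2/p})^{2-1/p}$. Hmm — this is not literally Lemma~\ref{doubleB}. The correct matching, as in the proof of Proposition~\ref{double}, requires that the overlap term at level $2p$ be $\|AB\|_p$ and that $\gamma = \|AB\|_p$ itself, in which case $\beta^p \le 2(1+\gamma^2)^{p-1}$ would need the overlap of $A^2,B^2$ to equal $\gamma^2 = \|AB\|_p^2$; so in fact I should \emph{define} $\gamma := \|AB\|_p$ if that is the natural overlap, or accept the symmetrized form and check that $\tr[(B^2)^{p/4}(A^2)^{p/2}(B^2)^{p/4}] = \|AB\|_p^p$ would be needed — which again is only an inequality via Araki--Lieb--Thirring. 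I therefore expect the main obstacle to be exactly this: \textbf{reconciling the non-commutative ordering of operators with the clean numerical inequality of Lemma~\ref{doubleB}}, i.e.\ showing that the symmetrized overlap quantity $\tr[B^{p/2}A^pB^{p/2}]$ appearing naturally from the induction hypothesis is comparable (in the right direction) to $\|AB\|_{p}^p$, and that $\|AB+BA\|_p \le 2\|AB\|_p$, with both comparisons pointing the correct way for $p \ge 2$. Once these two operator inequalities are in hand, Lemma~\ref{doubleB} with $t = 1 - 1/p$ and $\alpha = \|AB\|_p = \|fg\|_p$-analogue closes the argument verbatim, and the final sentence ``in particular \eqref{Schatv} holds for $p = 2^k$'' follows by induction starting from the identity at $p=2$.
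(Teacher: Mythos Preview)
Your approach is exactly the paper's, and the ``obstruction'' you worry about is not real: it comes from a bookkeeping slip in your definition of $\gamma$. In \eqref{Schatv} the overlap term carries an exponent $2/p$, which you dropped when applying the inductive hypothesis to $A^2,B^2$. The correct statement is
\[
\beta^p = \tr(A^2+B^2)^p \;\leq\; 2\Bigl(1+\bigl(\tr[B^{p/2}A^pB^{p/2}]\bigr)^{2/p}\Bigr)^{p-1},
\]
so the natural normalization is $\gamma := \bigl(\tr[B^{p/2}A^pB^{p/2}]\bigr)^{1/p}$, \emph{not} $\gamma^2 := \tr[B^{p/2}A^pB^{p/2}]$. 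With this choice of $\gamma$ all three quantities line up perfectly:
\begin{itemize}
\item the overlap in \eqref{Schatv} at exponent $2p$ is $\bigl(\tr[B^{p/2}A^pB^{p/2}]\bigr)^{1/p}=\gamma$, so the target is $\|A+B\|_{2p}^{2}\le 2^{1/p}(1+\gamma)^{2-1/p}$;
\item the induction gives $\beta \le 2^{1/p}(1+\gamma^2)^{1-1/p}$;
\item Lieb--Thirring (your Araki--Lieb--Thirring step) gives $\|AB\|_p^p=\tr[(BA^2B)^{p/2}]\le \tr[B^{p/2}A^pB^{p/2}]=\gamma^p$, hence $\|AB+BA\|_p\le 2\|AB\|_p\le 2\gamma$.
\end{itemize}
Thus $\|A+B\|_{2p}^2\le \beta+2\gamma\le 2^{1/p}(1+\gamma^2)^{1-1/p}+2\gamma$, and Lemma~\ref{doubleB} with $t=1-1/p$, $\alpha=\gamma$ finishes it verbatim. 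There is no need to pass through $\|AB\|_p$ as the variable in Lemma~\ref{doubleB}; the same $\gamma$ controls all three ingredients, and the two operator inequalities you flagged (triangle inequality plus Lieb--Thirring) point in exactly the right direction for $p\ge 2$. This is precisely the paper's proof.
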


\begin{proof}
Let $A$ and $B$ be positive operators in ${\mathcal S}_{2p}$, and assume that $\|A\|_{2p}^{2p} + \|B\|_{2p}^{2p} =2$, which, by homogeneity, 
entails no loss of generality.    Define
$$X := \frac12(AB + BA) \qquad{\rm and}\qquad  Y = A^2+B^2\ .$$
Note that 
$$\|X\|_p \leq \frac12( \|AB\|_p + \|BA\|_p)\ .$$
By definition, the Lieb--Thirring inequality \cite{LT}, and cyclicity of the trace,
$$\|AB\|_p^p = \tr[(BA^2B)^{p/2}] \leq \tr[B^{p/2}A^pB^{p/2}] =  \tr[A^{p/2}B^pA^{p/2}]\ .$$
Define 
$$\beta := \|Y\|_p\quad{\rm  and}\quad  \gamma := (\tr[B^{p/2}A^pB^{p/2}])^{1/p}\ .$$
Therefore,
$\|A+B\|_{2p}^2 = \|Y + 2X\|_p \leq \|Y\|_p + 2 \|X\|_p \leq   \beta + 2\gamma$. 
Since $\|A^2\|_p^p + \|B^2\|_p^p =2$, we can apply \eqref{Schatv} to deduce that
$$
\beta^p = \| A^2 + B^2 \|_p^p \leq 2 \left( 1+ (\tr[B^{2p/4}A^{p}B^{2p/4}])^{2/p}\right)^{p-1} = 2\left(1+\gamma^2\right)^{p-1}.
$$
Altogether 
$$\|A+B\|_{2p}^2  \leq 2^{1/p} (1+\gamma^2)^{1-1/p} + 2\gamma $$
and, by Lemma \ref{doubleB}, the right side is bounded above by $2^{1/p} (1+\gamma)^{2-1/p}$, which proves the inequality, 
\end{proof}

\noindent{\bf Acknowledgement} We thank Anthony Carbery for useful correspondence.


\bibliographystyle{amsalpha}

\end{document}